\newtheorem{theorem}{Theorem}[section]
\newtheorem{lemma}[theorem]{Lemma}
\newtheorem{proposition}[theorem]{Proposition}
\newtheorem{corollary}[theorem]{Corollary}
\def\N{\mathbb{N}}
\def\Z{\mathbb{Z}}
\def\Q{\mathbb{Q}}
\begin{document}

\baselineskip=17pt

\subjclass[2010]{11D09, 11R11}

\title[]{Non-extensibility of the pair $\{1, 3\}$ to a
Diophantine quintuple in $\Z\left[\sqrt{-d}\right]$}

\author[D. Kreso]{Dijana Kreso}
\address{Institut f\"ur Analysis und Computational Number Theory (Math A)\\
Technische Universit\"at Graz\\ Steyrergasse 30/II\\
8010 Graz, Austria}
\email{kreso@math.tugraz.at}

\author[Z. Franu\v si\' c]{Zrinka Franu\v si\' c}
\address{Department of Mathematics,
University of Zagreb, Croatia\\
Bijeni\v{c}ka c.\@ 30, 10 000 Zagreb, Croatia}
\email{fran@math.hr}

\begin{abstract}
We show that the Diophantine pair $\{1, 3\}$ can not be extended
to a Diophantine quintuple in the ring $\Z\left[\sqrt{-2}\right]$. This result completes the
work of the first author and establishes non-extensibility of the Diophantine pair
$\{1, 3\}$ to a Diophantine quintuple in $\Z\left[\sqrt{-d}\right]$ for all $d\in \N$.
\end{abstract}

\maketitle

\section{Introduction and results}
Let $R$ be a commutative ring with unity $1$. The set
$\{a_1,a_2,\ldots,a_m\}$ in $R$ such that $a_i\not =0$ for all
$i=1,\ldots,m$, $a_i\not =a_j$ and $a_ia_j+1$ is a square in $R$ for
all $1\leq i<j\leq m$, is called a {\it Diophantine $m$-tuple} in $R$.
 The problem of constructing such sets was first studied by
Diophantus of Alexandria who found a set of four rationals
$\left \{\frac{1}{16},\frac{33}{16},\frac{17}{4},\frac{105}{16}\right \}$ with
the given property. Fermat found a first Diophantine quadruple in integers
-  the set $\{1,3,8,120\}$. A Diophantine pair $\{a,b\}$
in a ring $R$, which satisfies $ab+1=r^2$, can be extended to a
Diophantine quadruple in $R$ by adding elements $a + b + 2r$ and
$4r(r + a)(r + b)$, provided all four elements are nonzero and different. 
Hence, apart from some exceptional cases, Diophantine quadruples in 
a ring $R$ exist, but can we obtain Diophantine $m$-tuples of size greater 
than $4$?

The folklore conjecture is that there are no
Diophantine quintuples in integers. In 1969, Baker and Davenport \cite{BD69}
showed that the set $\{1,3,8\}$ can not be extended to a Diophantine
quintuple, which was the first result supporting the conjecture.
This result was first generalized by Dujella \cite{D97}, who showed
that the set $\{k-1, k+1, 4k\}$, with integer $k\geq 2$, can not be extended to
a Diophantine quintuple in $\Z$. Dujella and Peth\H{o} \cite{DP98} later showed
that not even the Diophantine pair $\{1,3\}$
can be extended to a Diophantine quintuple in $\Z$. Greatest step
towards proving the conjecture did Dujella \cite{D04} in 2004; he
showed that there are no Diophantine sextuples in $\Z$ and that there are
only finitely many Diophantine quintuples. In \cite{DF04} it was
proved that there are no Diophantine quintuples in the ring of
polynomials with integers coefficients under assumption that not all
elements are constant polynomials.

The size of Diophantine $m$-tuples can be greater than $4$ in some rings. For
instance, the set 
$$\left \{\frac{11}{192}, \frac{35}{192}, \frac{155}{27},
\frac{512}{27}, \frac{1235}{48}, \frac{180873}{16}\right \}$$
is a Diophantine sextuple in $\Q$; it was found by Gibbs \cite{G06}.
Furthermore, we can construct Diophantine quintuples in the ring
$\Z\big[\sqrt{d}\big]$ for some values of $d$; for instance
$\{1,3,8,120,1678\}$ is a Diophantine quintuple in
$\Z\left[\sqrt{201361}\right]$. It is natural to start investigating the upper
bound for the size of Diophantine $m$-tuples in $\Z\big[\sqrt{d}\big]$ by
focusing on a problem of extensibility of Diophantine triples
$\{k-1,k+1, 4k\}$ and Diophantine pair
$\{1,3\}$ to a Diophantine quintuple in $\Z\big[\sqrt{d}\big]$, since the
problem in integers was approached similarly, see \cite{DP98} and
\cite{D97}.

In \cite{F10} Franu\v si\' c proved that the Diophantine pair
$\{1,3\}$ can not be extended to a Diophantine quintuple in $\Z\left[\sqrt{-d}\right]$ if
$d$ is a positive integer and $d\not =2$. The case $d=2$ was also
considered and it was shown that if $\{1,3,c\}$ is a Diophantine
triple in $\Z\left[\sqrt{-2}\right]$, then $c\in \{c_k,d_l\}$, where the
sequences $(c_k)$ and $(d_l)$ are given by
\begin{align}
\label{ck} c_k&=\frac{1}{6}\big((2+\sqrt{3})(7+4\sqrt{3})^k+(2-\sqrt{3})(7-4\sqrt{3})^k-4\big), \\ 
\label{dk} d_l&=\frac{-1}{6}\big((7+4\sqrt{3})^l+(7-4\sqrt{3})^l+4\big), 
\end{align}
where $k\geq 1$ and $l \geq 0$.
Sequences $(c_k)$ and $(d_l)$ are defined recursively as follows
\begin{align}
c_0&=0, & c_1&=8, & c_{k+2}&=14c_{k+1}-c_{k}+6;\\
d_0 &=-1, & d_1 &=-3, & d_{l+2}&=14d_{l+1}-d_{l}+8.
\end{align}
It is known that $\{1,3,c_k,c_{k+1}\}$, with $k\geq 1$, is a Diophantine
quadruple in $\Z$, see \cite{DP98}, and hence also in $\Z\left[\sqrt{-2}\right]$. The
set $\{1,3,d_l,d_{l+1}\}$ is a Diophantine quadruple in
$\Z\left[\sqrt{-2}\right]$ since
\begin{equation}\label{e.100}
d_ld_{l+1}+1=(c_l+2)^2
\end{equation}
for every $l\geq 0$; this easily follows from identities \eqref{ck} and \eqref{dk}. The set
$\{1,3,c_k,d_l\}$ is not a Diophantine quadruple for $k\geq 1$ and
$l\geq 0$ since $1+c_kd_l$ is a negative odd number and hence it can
not be a square in $\Z\left[\sqrt{-2}\right]$. Therefore, if there is an
extension of the Diophantine pair $\{1, 3\}$ to a Diophantine
quadruple in $\Z\left[\sqrt{-2}\right]$, then it is of the form
$\{1,3,c_k,c_l\}$, with $l>k\geq 1$ or $\{1,3,d_k,d_l\}$, with $l>k\geq 0$. In
the former case, the set can not be extended to a Diophantine
quintuple in $\Z$, see \cite{DP98}, wherefrom it easily follows that it can not be extended
to a Diophantine quintuple in  $\Z\left[\sqrt{-2}\right]$. It remains to
examine the latter case. We can formulate the following theorem.

\begin{theorem}\label{trm1}
Let $k$ be a nonnegative integer and $d$ an integer. If the set $\{1, 3, d_k, d\}$
is a Diophantine quadruple in $\Z\left[\sqrt{-2}\right]$,
where $d_k$ is given by $\eqref{dk}$, then $d=d_{k-1}$ or $d=d_{k+1}$.
\end{theorem}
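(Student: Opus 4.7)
The plan is to translate the quadruple condition into a simultaneous system of Pell-like equations in $\Z[\sqrt{-2}]$, classify their solutions as finitely many binary linear recurrent sequences, and force the indices to coincide by a congruence argument.

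Specifically, if $\{1,3,d_k,d\}$ is a Diophantine quadruple in $\Z[\sqrt{-2}]$, there exist $x,y,z\in\Z[\sqrt{-2}]$ with $d+1=x^2$, $3d+1=y^2$, $d_k d+1=z^2$. Eliminating $d$ yields
$$y^2-3x^2=-2,\qquad z^2-d_k x^2=1-d_k,\qquad 3z^2-d_k y^2=3-d_k,$$
of which only two are independent. My first step is to describe all solutions of the two independent equations in $\Z[\sqrt{-2}]$. Writing each variable in the basis $\{1,\sqrt{-2}\}$ splits each Pell-like equation into a pair of equations over $\Z$; alternatively one can factor in $\Z[\sqrt{-2}][\sqrt{3}]$ and use the fundamental unit $7+4\sqrt{3}$ of $\Z[\sqrt{3}]$. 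A finite enumeration of orbit representatives will then produce binary recurrences $(v_m^{(i)})$ for $x$ coming from $y^2-3x^2=-2$ and $(w_n^{(j)})$ for $x$ coming from $z^2-d_k x^2=1-d_k$, so that the condition $x=v_m^{(i)}=w_n^{(j)}$ must hold for some indices $m,n$ and orbit labels $i,j$.

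The second step is a congruence comparison of these sequences modulo a carefully chosen integer, such as $d_k$ or a small divisor of it, in the spirit of \cite{DP98}. Periodicity of the recurrences modulo this modulus constrains $m$ and $n$ to agree up to small shifts; combined with the identity \eqref{e.100}, which explicitly realises $d_{k\pm 1}$ as companions of $d_k$, this should pin down $d\in\{d_{k-1},d_{k+1}\}$ with no room for additional solutions.

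The main obstacle is the first step: over $\Z[\sqrt{-2}]$ the equation $y^2-3x^2=-2$ genuinely admits more classes of fundamental solutions than over $\Z$, and all of them must be enumerated to be sure no family of $x$-values is overlooked. Once this finite list is in hand the congruence calculations are conceptually standard but technically heavy, and the small cases (for instance $k=0$, where $d_0=-1$, or degenerate sign-choices making some of $x,y,z$ vanish) will need separate verification.
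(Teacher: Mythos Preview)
Your plan captures the opening moves of the paper's proof---translating the quadruple condition into a system of simultaneous Pellian equations, parametrising the solutions by binary recurrences, and comparing them via congruences modulo $d_k$---but it omits the decisive finishing step, and as stated it cannot be completed.

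First, a minor point: you work over $\Z[\sqrt{-2}]$ throughout, but since $d$ must be one of the integers $d_l$ (this is already established in \cite{F10} and recalled in the paper's introduction), the quantities $d+1$, $3d+1$, $d_kd+1$ are rational integers. A square in $\Z[\sqrt{-2}]$ that lies in $\Z$ is either $a^2$ or $-2b^2$ with $a,b\in\Z$; as $d+1<0$ and $3d+1<0$ while $d_kd+1>0$, one obtains $d+1=-2x^2$, $3d+1=-2y^2$, $d_kd+1=z^2$ with $x,y,z\in\Z$. So the problem descends immediately to $\Z$, and the extra fundamental classes over $\Z[\sqrt{-2}]$ you worry about do not arise.

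The genuine gap is your second step. Congruences modulo $d_k$ (and $d_k^2$) do \emph{not} pin down the indices $m,n$ completely; they only identify the fundamental solutions (forcing $z_0^{(i)}=z_1^{(j)}=c_{k-1}+2$, hence the companion $d_{k-1}$) and yield parity and growth constraints that give a \emph{lower} bound $n\ge \tfrac{2}{3}(-d_k)^{1/4}$ for any further intersection. To rule out such large-index intersections the paper needs two additional ingredients that your proposal does not mention: an \emph{upper} bound on $k$ coming from Bennett's theorem on simultaneous rational approximation of $\sqrt{1+1/d_k}$ and $\sqrt{1+1/(3d_k)}$, which contradicts the lower bound once $k\ge 6$; and, for the remaining $0\le k\le 5$, a reduction via Baker--W\"ustholz bounds on linear forms in logarithms followed by Baker--Davenport continued-fraction reduction. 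Without these effective Diophantine approximation tools the congruence argument leaves infinitely many potential coincidences unexcluded, and the proof cannot close.
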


From Theorem \ref{trm1} we immediately obtain the following corollary.

\begin{corollary}\label{c1}
The Diophantine pair $\{1, 3\}$ can not be extended to a Diophantine quintuple in $\Z\left[\sqrt{-2}\right]$.
\end{corollary}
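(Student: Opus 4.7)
The plan is to derive the corollary directly from Theorem~\ref{trm1} together with the classification of Diophantine triples $\{1,3,c\}$ in $\Z\left[\sqrt{-2}\right]$ recalled in the introduction from \cite{F10}. Suppose, for contradiction, that $\{1,3,a,b,c\}$ is a Diophantine quintuple in $\Z\left[\sqrt{-2}\right]$. Each of $\{1,3,a\}$, $\{1,3,b\}$, $\{1,3,c\}$ is then a Diophantine triple in $\Z\left[\sqrt{-2}\right]$, so by the cited classification each of $a,b,c$ belongs to $\{c_k : k\geq 1\}\cup\{d_l : l\geq 0\}$.

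The first routine step I would take is the elementary observation that if $n\in\Z$ is a square in $\Z\left[\sqrt{-2}\right]$, then $n$ is either a nonnegative integer square or of the form $-2m^2$ with $m\in\Z$: expanding $(x+y\sqrt{-2})^2=(x^2-2y^2)+2xy\sqrt{-2}$ and requiring the imaginary part to vanish forces $xy=0$. In particular, a negative odd integer cannot be a square in $\Z\left[\sqrt{-2}\right]$. Combined with the fact recalled in the excerpt that $1+c_kd_l$ is always a negative odd integer, this rules out any mixed pair among $\{a,b,c\}$, so either all three lie in $(c_k)$ or all three lie in $(d_l)$.

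If all three lie in $(c_k)$, then $\{1,3,a,b,c\}\subset\Z$ and every product-plus-one is a positive integer that is a square in $\Z\left[\sqrt{-2}\right]$, hence a square in $\Z$ by the same parity observation; this produces a Diophantine quintuple in $\Z$ containing $\{1,3\}$, contradicting \cite{DP98}. If instead $\{a,b,c\}=\{d_i,d_j,d_k\}$ with $i<j<k$ (these indices are uniquely determined because the sequence $(d_l)_{l\geq 0}$ is strictly decreasing, as is clear from \eqref{dk}), I would apply Theorem~\ref{trm1} twice, both times with the triple $\{1,3,d_j\}$ (note $j\geq 1$): monotonicity of $(d_l)$ and the conclusion $d\in\{d_{j-1},d_{j+1}\}$ force $d_i=d_{j-1}$ and $d_k=d_{j+1}$, i.e. $i=j-1$ and $k=j+1$. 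Applying Theorem~\ref{trm1} a third time to the quadruple $\{1,3,d_{j+1},d_{j-1}\}$, now based at the triple $\{1,3,d_{j+1}\}$, would require $d_{j-1}\in\{d_j,d_{j+2}\}$, which fails once more by strict monotonicity.

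The argument is essentially bookkeeping once Theorem~\ref{trm1} and the prior classification are available; the only nontrivial input beyond these is the elementary characterization of real squares in $\Z\left[\sqrt{-2}\right]$, which simultaneously eliminates the mixed case and reduces the pure $(c_k)$ case to the known result of Dujella and Peth\H o.
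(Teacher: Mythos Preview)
Your proof is correct and follows essentially the same route as the paper: the discussion preceding Theorem~\ref{trm1} already records that any extension of $\{1,3\}$ to a quadruple must be purely of $c$-type or purely of $d$-type (the mixed case being excluded since $1+c_kd_l$ is a negative odd integer), that the $c$-type case reduces to the result of \cite{DP98} in~$\Z$, and that the $d$-type case is then handled by Theorem~\ref{trm1}; you have simply made explicit the bookkeeping (the characterization of rational squares in $\Z[\sqrt{-2}]$ and the three applications of Theorem~\ref{trm1}) that the paper leaves as ``immediate.''
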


The organization of the paper is as follows. In Section \ref{sc2},
assuming $k$ to be minimal integer for which Theorem \ref{trm1} does
not hold, we translate the assumption of Theorem \ref{trm1} into
system of Pellian equations from which recurrent sequences
$\nu_m^{(i)}$ and $\omega_n^{(j)}$ are deduced, intersections of
which give solutions to the system. In Section \ref{sc3} we use a
congruence method introduced by Dujella and Peth\H{o} \cite{DP98} to
determine the fundamental solutions of Pellian equations. In Section
\ref{sc4} we give a lower bound for $m$ and $n$ for which the
sequences $\nu_m^{(i)}$ and $\omega_n^{(j)}$ intersect. In Section
\ref{sc5} we use a theorem of Bennett \cite{B98} to establish an
upper bound for $k$. Remaining cases are examined separately in
Section \ref{sc6} using linear forms in logarithms, Baker-W\"
ustholz theorem \cite{BW93} and the Baker-Davenport method of
reduction \cite{BD69}.

\section{The system of Pellian equations}\label{sc2}

Let $\{1,3,d_k,d\}$ be a Diophantine quadruple in $\Z\left[\sqrt{-2}\right]$ where $k$ is
the minimal integer for which Theorem \ref{trm1}
does not hold. Assume $k\geq 6$. Clearly $d=d_l$ for some $l\geq
0$. Since $d+1$ and $3d+1$ are negative integers and $d_kd+1$ is a positive
integer, it follows that there exist $x, y ,z \in \Z$ such that
\begin{equation}\label{e.110}
d+1=-2x^2,\qquad 3d+1=-2y^2, \qquad d_kd+1=z^2.
\end{equation} 
The system of equations \eqref{e.110} is equivalent to the following system of
Pellian equations
\begin{align}
\label{e.120} z^2+2d_kx^2&=1-d_k \\
\label{e.121} 3z^2+2d_ky^2&=3-d_k
\end{align}
where
\begin{equation}\label{e.122}
d_k+1=-2s_k^2,\qquad 3d_k+1=-2t_k^2,
\end{equation}
for some $s_k, t_k \in \Z$. Note that we may assume $s_k, t_k \in \N$. Conditions
\eqref{e.122} follow from the fact that $\{1,3,d_k\}$ is a
Diophantine triple in $\Z\left[\sqrt{-2}\right]$ and the fact that $d_k+1$ and $3d_k+1$
are negative integers.

The following propositions describe the set of positive integer solutions of equations \eqref{e.120} and \eqref{e.121}.

\begin{proposition}\label{p2}
There exist $i_0\in\N$ and $z_0^{(i)},x_0^{(i)}\in\Z$, $i=1,2,\ldots,i_0$, such that
$\left(z_0^{(i)},x_0^{(i)}\right)$ are solutions
of the equation \eqref{e.120}, which satisfy
$$1\leq z_0^{(i)}\leq\sqrt{-d_k(1-d_k)}, \qquad 1\leq \left|x_0^{(i)}\right|
\leq\sqrt{\frac{1-d_k^2}{2d_k}},$$
and such that for every solution $(z,x)\in \N \times \N$ of the equation \eqref{e.120},
there exists $i\in\{1,2,\ldots,i_0\}$ and an integer $m\geq 0$ such that
$$z+x\sqrt{-2d_k}=\left(z_0^{(i)}+x_0^{(i)}\sqrt{-2d_k}\right)\left(-2d_k -1+2s_k\sqrt{-2d_k}\right)^m.$$
\end{proposition}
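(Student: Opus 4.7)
The approach is a classical Nagell-style descent for generalized Pell equations. Let $D := -2d_k > 0$ and $N := 1 - d_k > 0$, so \eqref{e.120} reads $z^2 - Dx^2 = N$. Using $s_k^2 = (-d_k-1)/2$ from \eqref{e.122}, a direct calculation gives $(-2d_k-1)^2 - D\cdot(2s_k)^2 = 1$, so that
\[
\mu := (-2d_k - 1) + 2s_k\sqrt{D}
\]
is a unit $>1$ in $\Z[\sqrt{D}]$ with inverse $\mu^{-1} = (-2d_k - 1) - 2s_k\sqrt{D} \in (0,1)$.

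For any solution $(z,x) \in \N \times \N$ I would form $\alpha := z + x\sqrt{D}$ and its conjugate $\bar\alpha := z - x\sqrt{D}$, so that $\alpha \bar\alpha = N > 0$ and both $\alpha, \bar\alpha > 0$. I would then choose the unique integer $m \geq 0$ for which $\alpha_0 := \alpha \mu^{-m}$ lies in the ``fundamental interval'' $\bigl[\sqrt{N/\mu},\, \sqrt{N\mu}\bigr)$; nonnegativity of $m$ is secured by the elementary estimate $\alpha^2 \geq (1+\sqrt{D})^2 \geq N \geq N/\mu$, valid since $d_k < 0$. Writing $\alpha_0 = z_0 + x_0\sqrt{D}$, integrality of $z_0, x_0 \in \Z$ is automatic since $\mu^{\pm 1} \in \Z[\sqrt{D}]$.

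The bounds come from the identities
\[
4z_0^2 = \alpha_0^2 + \bar\alpha_0^2 + 2N, \qquad 4Dx_0^2 = \alpha_0^2 + \bar\alpha_0^2 - 2N,
\]
together with the observation that $t \mapsto t + N^2/t$ is convex on $(0,\infty)$, so on $t \in [N/\mu, N\mu]$ its maximum is attained at the endpoints, giving $\alpha_0^2 + \bar\alpha_0^2 \leq N(\mu + \mu^{-1}) = 2N(-2d_k - 1)$. Substituting yields exactly $z_0^2 \leq -d_k(1-d_k)$ and $x_0^2 \leq (1-d_k^2)/(2d_k)$, matching the stated inequalities. The positivity $z_0 \geq 1$ follows from $z_0 = (\alpha_0 + \bar\alpha_0)/2 > 0$ with $z_0 \in \Z$, and finiteness of $i_0$ is then immediate since the reduced pairs lie in a finite box.

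The main obstacle I would treat carefully is the degenerate case $x_0 = 0$: this forces $N = 1 - d_k$ to be a perfect square in $\Z$, yielding a ``trivial'' class that must either be verified not to occur for the $d_k$ in question or be listed separately. Once this is dismissed, the construction gives that every $(z, x) \in \N \times \N$-solution of \eqref{e.120} admits the required expression $\alpha = \bigl(z_0^{(i)} + x_0^{(i)}\sqrt{D}\bigr)\mu^m$ with $i \in \{1, \ldots, i_0\}$ and $m \geq 0$, completing the proof.
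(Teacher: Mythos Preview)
Your proposal is correct and follows essentially the same Nagell-style descent as the paper: identify the unit $\mu = -2d_k - 1 + 2s_k\sqrt{-2d_k}$, reduce each positive solution into a bounded fundamental region, and read off the stated inequalities. The paper obtains the bounds by citing Nagell's Theorem~108 and then argues $m\geq 0$ and $x_0^{(i)}\neq 0$ separately by contradiction, whereas you derive the bounds directly via the convexity of $t\mapsto t + N^2/t$ and build $m\geq 0$ into the choice of fundamental interval; these are cosmetic differences within the same method.
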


\begin{proof}
The fundamental solution of the related Pell's equation
$z^2+2d_kx^2=1$ is $-2d_k-1+2s_k\sqrt{-2d_k}$ since
$$(-2d_k-1)^2+2d_k\cdot (2s_k)^2=4d_k^2+4d_k+1-4d_k(1+d_k)=1$$
and $-2d_k-1>2s_k^2-1=-d_k-2,$ see \cite[Theorem 105]{N91}.
Following arguments of Nagell \cite[Theorem 108]{N91} we
obtain that there are finitely many integer solutions
$\left(z_0^{(i)},x_0^{(i)}\right)$, $i=1,2,\ldots,i_0$ of the equation
\eqref{e.120} such that the following inequalities hold
$$1\leq \left|z_0^{(i)}\right|\leq\sqrt{-d_k(1-d_k)}, \qquad 0\leq \left|x_0^{(i)}\right|\leq\sqrt{\frac{1-d_k^2}{2d_k}},$$
and such that if $z+x\sqrt{-2d_k}$ is a solution of the
equation \eqref{e.120} with $z$ and $x$ in $\Z$, then
$$z+x\sqrt{-2d_k}=\left(z_0^{(i)}+x_0^{(i)}\sqrt{-2d_k}\right)\left(-2d_k-1+2s_k\sqrt{-2d_k}\right)^m$$
for some $m\in \Z$ and  $i\in \{1, 2, \ldots , i_0\}$. Hence
$$z_0^{(i)}+x_0^{(i)}\sqrt{-2d_k}=\left(z+x\sqrt{-2d_k}\right)\left(-2d_k-1+2s_k\sqrt{-2d_k}\right)^{-m},$$
wherefrom it can be easily deduced that if $z+x\sqrt{-2d_k}$ is a
solution of the equation \eqref{e.120} with $z$ and $x$ in $\N$,
then $z_0^{(i)}>0$. Hence 
$$1\leq z_0^{(i)} \leq \sqrt{-d_k(1-d_k)}$$
for all $i\in \{1, 2, \ldots , i_0\}$. If $x_0^{(i)}=0$, we get a contradiction with the upper bound for $z_0^{(i)}$, hence $\left|x_0^{(i)}\right|\geq 1$.
To complete the proof it remains to show that $m\geq 0$. Assume to the contrary that
$m<0$. Then
$$\left(-2d_k-1+2s_k\sqrt{-2d_k}\right)^m=\alpha-\beta\sqrt{-2d_k}$$
with $\alpha,\beta\in\N$ and $\alpha^2+2d_k\beta^2=1$. Since
$$z+x\sqrt{-2d_k}=\left(z_0^{(i)}+x_0^{(i)}\sqrt{-2d_k}\right)\left(\alpha-\beta \sqrt{-2d_k}\right),$$
we have $x=-z_0^{(i)}\beta +x_0^{(i)} \alpha$. By  squaring $x_0^{(i)}
\alpha=x+z_0^{(i)}\beta$ and substituting $\alpha^2=1-2d_k\beta^2$ we get
$$\left(x_0^{(i)}\right)^2=\beta^2(1-d_k)+x^2+2xz_0^{(i)}\beta>\beta^2(1-d_k)\geq 1-d_k>\frac{1-d_k^2}{2d_k},$$
since $x$, $z_0^{(i)}$, $\beta$ and $k $ are positive integers. This is in contradiction with the upper bound for $x_0^{(i)}$.
\end{proof}

Using same arguments we can prove the following proposition.

\begin{proposition}\label{p3}
There exists $j_0\in\N$ and $z_1^{(j)},y_1^{(j)}\in\Z$, $j=1,2,\ldots,j_0$, such that
$\left(z_1^{(j)},y_1^{(j)}\right)$ are solutions of the equation \eqref{e.121}, which satisfy
$$1\leq z_1^{(j)} \leq \sqrt{-d_k(3-d_k)},\qquad 1\leq \left|y_1^{(j)}\right| \leq \sqrt{\frac{(3-d_k)(1+3d_k)}{2d_k}},$$
and such that for every solution $(z,y)\in \N \times \N$ of the equation \eqref{e.121},
 there exists $j\in\{1,2,\ldots,j_0\}$ and an integer $n\geq 0$ such that $$z\sqrt{3}+y\sqrt{-2d_k}=\Big(z_1^{(j)}\sqrt{3}+y_1^{(j)}\sqrt{-2d_k} \Big)\Big(-6d_k-1+2t_k\sqrt{-6d_k}\Big)^n.$$
\end{proposition}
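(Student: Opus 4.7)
The plan is to mimic the proof of Proposition \ref{p2} almost verbatim, working in the $\mathbb{Z}[\sqrt{-6d_k}]$-module spanned by $\sqrt{3}$ and $\sqrt{-2d_k}$. Using $d_k<0$, the identities $\sqrt{3}\cdot\sqrt{-6d_k}=3\sqrt{-2d_k}$ and $\sqrt{-2d_k}\cdot\sqrt{-6d_k}=-2d_k\sqrt{3}$ yield the multiplication rule
\[
(z\sqrt{3}+y\sqrt{-2d_k})(a+b\sqrt{-6d_k})=(az+2bd_ky)\sqrt{3}+(3bz+ay)\sqrt{-2d_k},
\]
under which the form $3z^2+2d_ky^2$ scales multiplicatively by $a^2+6d_kb^2$. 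The role played by $z^2+2d_kx^2=1$ in Proposition \ref{p2} is now played by $u^2+6d_kv^2=1$, so the remaining steps transcribe almost mechanically.

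First I would verify that $-6d_k-1+2t_k\sqrt{-6d_k}$ is the fundamental solution of $u^2+6d_kv^2=1$: the identity $(-6d_k-1)^2+6d_k(2t_k)^2=1$ follows directly from $3d_k+1=-2t_k^2$ in \eqref{e.122}, and minimality is Nagell's Theorem 105 together with $-6d_k-1>2t_k^2-1=-3d_k-2$ for $d_k<0$. Applying Nagell's Theorem 108 to \eqref{e.121} (equivalently, to $(3z)^2-(-6d_k)y^2=3(3-d_k)$) partitions its integer solutions into finitely many orbits under multiplication by this unit, each orbit having a representative $(z_1^{(j)},y_1^{(j)})$ in the range claimed by the proposition. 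As in Proposition \ref{p2}, inverting the fundamental unit and tracking signs shows $z_1^{(j)}>0$ whenever $(z,y)\in\mathbb{N}\times\mathbb{N}$, and an analogous argument gives $|y_1^{(j)}|\geq 1$.

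The main step is to rule out $n<0$. Writing $(-6d_k-1+2t_k\sqrt{-6d_k})^n=\alpha-\beta\sqrt{-6d_k}$ with $\alpha,\beta\in\mathbb{N}$ and $\alpha^2+6d_k\beta^2=1$, the multiplication formula gives $y=\alpha y_1^{(j)}-3\beta z_1^{(j)}$. Squaring $\alpha y_1^{(j)}=y+3\beta z_1^{(j)}$, substituting $\alpha^2=1-6d_k\beta^2$, and using \eqref{e.121} to replace $9\beta^2(z_1^{(j)})^2$ by $3\beta^2(3-d_k)-6d_k\beta^2(y_1^{(j)})^2$, the $-6d_k\beta^2(y_1^{(j)})^2$ terms cancel and one obtains
\[
(y_1^{(j)})^2 = y^2+6\beta y z_1^{(j)}+3\beta^2(3-d_k) > 3(3-d_k) > \frac{(3-d_k)(1+3d_k)}{2d_k},
\]
where the last inequality reduces to the trivial $3d_k<1$. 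This contradicts the upper bound on $|y_1^{(j)}|$, so $n\geq 0$.

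I expect the only real obstacle to be bookkeeping the signs introduced by $d_k<0$ and by the two square-root products that define the action of $\mathbb{Z}[\sqrt{-6d_k}]$; once the multiplication rule above is verified with the correct signs, everything else is a direct transcription of the argument in Proposition \ref{p2}.
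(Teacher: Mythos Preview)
Your proposal is correct and follows exactly the route the paper intends: the paper's own proof is literally ``Using same arguments we can prove the following proposition,'' and you have faithfully transcribed the argument of Proposition~\ref{p2} into the setting of \eqref{e.121}, with the Pell equation $u^2+6d_kv^2=1$ playing the role of $z^2+2d_kx^2=1$. One minor slip: in your displayed multiplication rule the $\sqrt{3}$-coefficient should be $az-2bd_ky$ (from $\sqrt{-2d_k}\cdot\sqrt{-6d_k}=-2d_k\sqrt{3}$), not $az+2bd_ky$; this typo is harmless since your $n<0$ computation uses only the (correct) $\sqrt{-2d_k}$-coefficient $y=\alpha y_1^{(j)}-3\beta z_1^{(j)}$.
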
\qed

Finitely many solutions that satisfy bounds given in Proposition
\ref{p2} and Proposition \ref{p3} will be called {\it fundamental}
solutions.

From Proposition \ref{p2} and Proposition \ref{p3} it follows that
if $(z,x)$ is a solution in positive integers  of the equation \eqref{e.120},
then $z=\nu_m^{(i)}$ for some $m\geq 0$ and $i\in\{1,2,\ldots,i_0\}$, where
\begin{align}\label{niz1}
\nonumber \nu_0^{(i)}&=z_0^{(i)},\\
\nonumber \nu_1^{(i)}&=(-2d_k-1)z_0^{(i)}-4s_kd_kx_0^{(i)},\\
\nu_{m+2}^{(i)}&=(-4d_k-2)\nu_{m+1}^{(i)}-\nu_m^{(i)},
\end{align}
and if $(z,y)$ is a solution in positive
integers of the equation \eqref{e.121}, then $z=\omega_n^{(j)}$ for some $n\geq 0$ and
$j\in\{1,2,\ldots,j_0\}$, where
\begin{align}\label{niz2}
\nonumber \omega_0^{(j)}&=z_1^{(j)},\\
\nonumber \omega_1^{(j)}&=(-6d_k-1)z_1^{(j)}-4t_kd_ky_1^{(j)},\\
\omega_{n+2}^{(j)}&=(-12d_k-2)\omega_{n+1}^{(j)}-\omega_n^{(j)}.
\end{align}
Therefore, we are looking for the intersection of
sequences $\nu_m^{(i)}$ and $\omega_n^{(j)}$.

\section{Congruence method}\label{sc3}
Using the congruence method introduced by Dujella and Peth\H{o}
\cite{DP98} we determine the fundamental solutions of the equations
\eqref{e.120} and \eqref{e.121}.

\begin{lemma}\label{l1}
$$\nu_{2m}^{(i)}\equiv z_0^{(i)} \pmod {-2d_k}, \qquad \nu_{2m+1}^{(i)}\equiv -z_0^{(i)} \pmod {-2d_k},$$
$$\omega_{2n}^{(j)}\equiv z_1^{(j)}\pmod {-2d_k}, \qquad \omega_{2n+1}^{(j)}\equiv -z_1^{(j)}\pmod { -2d_k},$$
for all $m, n\geq 0$, $i\in \{1, 2, \ldots , i_0\}$,  $j\in \{1, 2, \ldots , j_0\}$.
\end{lemma}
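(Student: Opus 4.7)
The plan is to prove all four congruences by a straightforward induction on $m$ (resp. $n$), treating the two sequences in parallel. The key observation is that modulo $-2d_k$ the recurrence coefficients collapse: $-2d_k-1\equiv -1$, $-4d_k-2\equiv -2$, $-6d_k-1\equiv -1$ and $-12d_k-2\equiv -2$, while any term of the form $4s_kd_k x_0^{(i)}$ or $4t_kd_k y_1^{(j)}$ vanishes, because $-2d_k$ divides $4d_k$. So once the base cases are in place, the induction is mechanical.

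For the base cases of the $\nu$ sequence, $\nu_0^{(i)}=z_0^{(i)}$ is immediate, and from the explicit formula in \eqref{niz1}
\[
\nu_1^{(i)}=(-2d_k-1)z_0^{(i)}-4s_kd_k x_0^{(i)}\equiv -z_0^{(i)}\pmod{-2d_k},
\]
since each of $(-2d_k)z_0^{(i)}$ and $4s_kd_k x_0^{(i)}$ is a multiple of $-2d_k$. The identical calculation with $(-6d_k-1)$ and $4t_kd_k$ in place of $(-2d_k-1)$ and $4s_kd_k$ handles $\omega_0^{(j)}$ and $\omega_1^{(j)}$.

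For the inductive step, assume $\nu_{2m}^{(i)}\equiv z_0^{(i)}$ and $\nu_{2m+1}^{(i)}\equiv -z_0^{(i)}\pmod{-2d_k}$. Applying the recurrence from \eqref{niz1} twice:
\[
\nu_{2m+2}^{(i)}=(-4d_k-2)\nu_{2m+1}^{(i)}-\nu_{2m}^{(i)}\equiv (-2)(-z_0^{(i)})-z_0^{(i)}=z_0^{(i)},
\]
\[
\nu_{2m+3}^{(i)}=(-4d_k-2)\nu_{2m+2}^{(i)}-\nu_{2m+1}^{(i)}\equiv (-2)z_0^{(i)}-(-z_0^{(i)})=-z_0^{(i)},
\]
both modulo $-2d_k$. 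The step for $\omega_n^{(j)}$ is structurally identical using the recurrence from \eqref{niz2} and $-12d_k-2\equiv -2\pmod{-2d_k}$.

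There is no real obstacle in this lemma; its content is essentially bookkeeping about the recurrences reduced modulo $-2d_k$. Its role, however, is substantive: it pins down $\nu_m^{(i)}\equiv \pm z_0^{(i)}$ and $\omega_n^{(j)}\equiv \pm z_1^{(j)}\pmod{-2d_k}$, which in the next step will be compared with the bounds on $|z_0^{(i)}|,|z_1^{(j)}|$ from Propositions \ref{p2} and \ref{p3} in order to identify the admissible fundamental solutions and, eventually, match the two sequences up to small index.
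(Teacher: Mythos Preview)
Your proof is correct and follows exactly the approach the paper indicates: the paper's own proof consists solely of the sentence ``Easily follows by induction,'' and you have simply written out that induction in full, with the right base cases and the reduction of the recurrence coefficients modulo $-2d_k$.
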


\begin{proof} 
Easily follows by induction. 
\end{proof}

\begin{lemma}\label{l2}
If $\nu_{m}^{(i)}=\omega_{n}^{(j)}$ for some $m, n\geq 0$,
$i\in \{1, 2, \ldots , i_0\}$,  $j\in \{1, 2, \ldots , j_0\}$, then $z_0^{(i)}=z_1^{(j)}$ or $z_0^{(i)}+z_1^{(j)}=-2d_k.$
\end{lemma}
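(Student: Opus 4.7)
The plan is to combine the parity-based congruences from Lemma \ref{l1} with the size bounds on the fundamental solutions from Propositions \ref{p2} and \ref{p3}. Suppose $\nu_m^{(i)}=\omega_n^{(j)}$. I would split into four cases according to the parities of $m$ and $n$; Lemma \ref{l1} then yields $\pm z_0^{(i)} \equiv \pm z_1^{(j)} \pmod{-2d_k}$, with the signs determined by these parities. When $m$ and $n$ have the same parity, the relation collapses to $z_0^{(i)} \equiv z_1^{(j)} \pmod{-2d_k}$; when they have opposite parity, it becomes $z_0^{(i)} \equiv -z_1^{(j)} \pmod{-2d_k}$, i.e.\ $z_0^{(i)}+z_1^{(j)}\equiv 0 \pmod{-2d_k}$.

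Next, I would invoke the explicit bounds $1\leq z_0^{(i)} \leq \sqrt{-d_k(1-d_k)}$ and $1\leq z_1^{(j)} \leq \sqrt{-d_k(3-d_k)}$ coming from Propositions \ref{p2} and \ref{p3}. Writing $D=-d_k>0$, the two elementary estimates
$$\sqrt{D(3+D)} < 2D \quad\text{and}\quad \sqrt{D(1+D)} + \sqrt{D(3+D)} < 4D$$
both reduce to the trivial inequality $D>1$. Since $d_k\leq -3$ for all $k\geq 1$, and certainly under the standing assumption $k\geq 6$, both inequalities hold. Consequently $|z_0^{(i)}-z_1^{(j)}| < -2d_k$ in the same-parity case, which forces $z_0^{(i)}=z_1^{(j)}$; and $0<z_0^{(i)}+z_1^{(j)}<-4d_k$ in the opposite-parity case, so that the only positive multiple of $-2d_k$ that $z_0^{(i)}+z_1^{(j)}$ can equal is $-2d_k$ itself.

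I do not expect any genuine obstacle here: the argument is essentially bookkeeping on parities plus two elementary inequalities. The only step requiring a moment of care is confirming that the a priori bounds from the fundamental-solution estimates are strictly smaller than $-2d_k$ (respectively that the sum bound is strictly smaller than $-4d_k$), because it is precisely this strictness that promotes the congruences obtained from Lemma \ref{l1} into the equalities asserted in the lemma.
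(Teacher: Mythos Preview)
Your proposal is correct and follows essentially the same route as the paper: use Lemma~\ref{l1} to obtain $z_0^{(i)}\equiv \pm z_1^{(j)}\pmod{-2d_k}$, then invoke the size bounds from Propositions~\ref{p2} and~\ref{p3} to upgrade each congruence to the corresponding equality. The only cosmetic differences are that the paper does not explicitly tag the two congruence cases by the parities of $m,n$, and it uses the slightly sharper estimate $\sqrt{-d_k(1-d_k)}+\sqrt{-d_k(3-d_k)}<-2d_k+3$ in place of your $<-4d_k$; both suffice under the standing assumption $k\geq 6$.
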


\begin{proof}
From Lemma \ref{l1} it follows that either $z_0^{(i)}\equiv z_1^{(j)}\pmod {-2d_k}$ or $
z_0^{(i)}\equiv -z_1^{(j)} \pmod {-2d_k}$. In the latter case $z_0^{(i)}+z_1^{(j)}\equiv 0\pmod
{-2d_k}$. From Proposition \ref{p2} and Proposition \ref{p3} we get
\begin{align*}
0&<z_0^{(i)}+z_1^{(j)}\leq\sqrt{-d_k(1-d_k)}+\sqrt{-d_k(3-d_k)}\\
&<-d_k+1-d_k+2=-2d_k+3,
\end{align*}
wherefrom it follows that $z_0^{(i)}+z_1^{(j)}=-2d_k$. 
If $z_0^{(i)}\equiv z_1^{(j)}\pmod{-2d_k}$ and $z_0^{(i)}> z_1^{(j)}$, then 
$$0<z_0^{(i)} -z_1^{(j)}<z_0^{(i)}\leq \sqrt{-d_k(1-d_k)}<-2d_k,$$ 
contradiction. Analogously, if $z_1^{(j)}>z_0^{(i)}$, then 
$$0<z_1^{(j)}-z_0^{(i)} <z_1^{(j)}\leq \sqrt{-d_k(3-d_k)}<-2d_k,$$
contradiction.
\end{proof}

\begin{lemma}\label{l3}
\begin{align}
\nu_m^{(i)}&\equiv (-1)^m \left(z_0^{(i)}+2d_km^2z_0^{(i)}+4d_ks_kmx_0^{(i)}\right)\pmod{8d_k^2}\\
\omega_n^{(j)}&\equiv (-1)^n \left(z_1^{(j)}+6d_kn^2z_1^{(j)}+4d_kt_kny_1^{(j)}\right)\pmod{ 8d_k^2}
\end{align}
for all $m, n\geq 0$, $i\in \{1, 2, \ldots , i_0\}$,  $j\in \{1, 2, \ldots , j_0\}$.
\end{lemma}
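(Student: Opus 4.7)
The plan is a straightforward double induction on $m$ (for the first congruence) and on $n$ (for the second), using the linear recurrences \eqref{niz1} and \eqref{niz2}. Both statements have the same shape, so I describe the $\nu$-case in detail and note that the $\omega$-case is identical in structure.

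For the base cases of the $\nu$-congruence: when $m=0$, both sides reduce to $z_0^{(i)}$; when $m=1$, the formula $\nu_1^{(i)}=(-2d_k-1)z_0^{(i)}-4s_kd_kx_0^{(i)}$ expands as $-z_0^{(i)}-2d_kz_0^{(i)}-4d_ks_kx_0^{(i)}$, which equals $(-1)\bigl(z_0^{(i)}+2d_k\cdot 1\cdot z_0^{(i)}+4d_ks_k\cdot 1\cdot x_0^{(i)}\bigr)$ on the nose, no reduction required. For the inductive step, assume the congruence holds for indices $m$ and $m+1$, substitute into $\nu_{m+2}^{(i)}=(-4d_k-2)\nu_{m+1}^{(i)}-\nu_m^{(i)}$, and reduce modulo $8d_k^2$. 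The key simplifications are $(4d_k+2)\cdot 2d_k=8d_k^2+4d_k\equiv 4d_k$ and $(4d_k+2)\cdot 4d_k=16d_k^2+8d_k\equiv 8d_k\pmod{8d_k^2}$, which eliminate the cross terms that would otherwise be of order $d_k^2$. After collecting terms, the coefficient of $z_0^{(i)}$ becomes $1+2d_k\bigl((m+1)^2\cdot 2-m^2+2\bigr)=1+2d_k(m+2)^2$, and the coefficient of $x_0^{(i)}$ becomes $4d_ks_k\bigl(2(m+1)-m\bigr)=4d_ks_k(m+2)$, using the elementary identity $2d_k(m+2)^2=2d_km^2+8d_km+8d_k$. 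Combined with the sign $(-1)^m=(-1)^{m+2}$, this is exactly the desired right-hand side for index $m+2$.

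The argument for $\omega_n^{(j)}$ is entirely parallel, based on the recurrence $\omega_{n+2}^{(j)}=(-12d_k-2)\omega_{n+1}^{(j)}-\omega_n^{(j)}$. The relevant reductions modulo $8d_k^2$ are $(12d_k+2)\cdot 6d_k=72d_k^2+12d_k\equiv 12d_k$ (since $72=9\cdot 8$) and $(12d_k+2)\cdot 4d_k=48d_k^2+8d_k\equiv 8d_k$ (since $48=6\cdot 8$). The same algebraic manipulation then yields the coefficients $1+6d_k(n+2)^2$ and $4d_kt_k(n+2)$, closing the induction.

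There is no serious obstacle: the computation is mechanical once the base cases and the two modular reductions above are in hand. The only points where I would be careful are tracking the alternating sign $(-1)^m$ through the recurrence (since $(-4d_k-2)$ contributes a factor of $-1$ that flips parity correctly), and making sure that the terms of size $d_k^2$ really do vanish modulo $8d_k^2$ with the numerical factors as stated; these are exactly the two congruences $(4d_k+2)\cdot 2d_k\equiv 4d_k$ and $(12d_k+2)\cdot 6d_k\equiv 12d_k\pmod{8d_k^2}$ mentioned above.
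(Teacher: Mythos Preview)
Your proof is correct and follows exactly the approach the paper intends: the paper's own proof reads simply ``Easily follows by induction,'' and you have carried out that induction explicitly, with the base cases and the modular reductions $(4d_k+2)\cdot 2d_k\equiv 4d_k$ and $(12d_k+2)\cdot 6d_k\equiv 12d_k\pmod{8d_k^2}$ being precisely the points that make the inductive step work.
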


\begin{proof} 
Easily follows by induction.
\end{proof}

\begin{lemma}\label{l4}
If $\nu_m^{(i)}=\omega_n^{(j)}$ for some $m, n\geq 0$,  $i\in \{1, 2, \ldots , i_0\}$,
$j\in \{1, 2, \ldots , j_0\}$, then $m\equiv n \pmod {2}$.
\end{lemma}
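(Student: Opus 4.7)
The plan is to argue by contradiction: suppose $\nu_m^{(i)}=\omega_n^{(j)}$ with $m$ and $n$ of opposite parity. From Lemma \ref{l1} this forces $(-1)^m z_0^{(i)}\equiv (-1)^n z_1^{(j)}\pmod{-2d_k}$, hence $z_0^{(i)}+z_1^{(j)}\equiv 0\pmod{-2d_k}$. The bound argument from the proof of Lemma \ref{l2} (using $0<z_0^{(i)}+z_1^{(j)}<-2d_k+3$ together with $-2d_k$ large) then forces
$$z_0^{(i)}+z_1^{(j)}=-2d_k.$$

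The key auxiliary observation is that $d_k$ is odd for every $k\geq 0$. This is immediate by induction from $d_0=-1$, $d_1=-3$ and $d_{l+2}=14d_{l+1}-d_l+8\equiv d_l\pmod 2$. Consequently, reducing \eqref{e.120} and \eqref{e.121} modulo $2$ gives $z^2\equiv 1-d_k\equiv 0\pmod 2$, so every integer solution $z$ is even; in particular $z_0^{(i)}$ and $z_1^{(j)}$ are even integers.

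To close the argument I would apply Lemma \ref{l3}. Since $(-1)^{m+n}=-1$ under the opposite-parity assumption, the equality $\nu_m^{(i)}=\omega_n^{(j)}$ gives, modulo $8d_k^2$,
$$\bigl(z_0^{(i)}+z_1^{(j)}\bigr)+2d_k\bigl(m^2 z_0^{(i)}+3n^2 z_1^{(j)}+2s_k m x_0^{(i)}+2t_k n y_1^{(j)}\bigr)\equiv 0\pmod{8d_k^2}.$$
Substituting $z_0^{(i)}+z_1^{(j)}=-2d_k$ and dividing through by $2d_k$ (which reduces the modulus to $4d_k$) yields
$$-1+m^2 z_0^{(i)}+3n^2 z_1^{(j)}+2s_k m x_0^{(i)}+2t_k n y_1^{(j)}\equiv 0\pmod{4d_k}.$$
Reducing this congruence modulo $2$ and using that $z_0^{(i)}$ and $z_1^{(j)}$ are even makes every term on the left except $-1$ vanish, leaving the contradiction $-1\equiv 0\pmod 2$.

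I expect no serious obstacle; the only point requiring care is checking that $2d_k$ divides out cleanly from the congruence modulo $8d_k^2$, which is immediate since every term inside the large parenthesis is an integer. The argument is symmetric in $m$ and $n$, so both mixed-parity subcases ($m$ even with $n$ odd, and $m$ odd with $n$ even) are handled simultaneously.
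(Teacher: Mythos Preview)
Your proof is correct and follows essentially the same route as the paper's: deduce $z_0^{(i)}+z_1^{(j)}=-2d_k$ from Lemmas \ref{l1} and \ref{l2}, plug this into the refined congruence of Lemma \ref{l3}, divide by $2d_k$, and obtain a parity contradiction from the fact that $d_k$ is odd (hence $z_0^{(i)},z_1^{(j)}$ are even). The only cosmetic difference is that you treat both mixed-parity cases at once via $(-1)^{m+n}=-1$, whereas the paper writes out the case $m$ even, $n$ odd and declares the other analogous.
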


\begin{proof} 
If $m$ is even and $n$ odd, then Lemma
\ref{l1} and Lemma \ref{l2} imply $z_0^{(i)}+ z_1^{(j)}=-2d_k$. Lemma
\ref{l3} implies $$z_0^{(i)}+2d_km^2z_0^{(i)}+4d_ks_kmx_0^{(i)}
\equiv -z_1^{(j)}-6d_kn^2z_1^{(j)}-4d_kt_kny_1^{(j)}\pmod
{8d_k^2},$$ wherefrom, by substituting $z_0^{(i)}+ z_1^{(j)}=-2d_k$
and dividing by $2d_k$, we obtain
$$-1+m^2z_0^{(i)}+2s_kmx_0^{(i)}\equiv-3n^2z_1^{(j)}-2t_kny_1^{(j)}\pmod {-4d_k}.$$
Since $d_k$ is always odd, from \eqref{e.120} and \eqref{e.121} we
get that  $z_0^{(i)}$ and $z_1^{(j)}$ are even, hence the last
congruence can not hold. Indeed, on the left side is an odd integer
and on the right side is an even integer, contradiction. If $m$ is odd and $n$ even, 
contradiction can be obtained analogously.
\end{proof}

Therefore, the equations $\nu_{2m}^{(i)}=\omega_{2n+1}^{(j)}$ and
$\nu_{2m+1}^{(i)}=\omega_{2n}^{(j)}$ have no solutions in integers
$m, n\geq 0$, $i\in \{1, 2, \ldots , i_0\}$,  $j\in \{1, 2, \ldots ,
j_0\}$. It remains to examine the cases when $m$ and $n$ are
both even or both odd. In each of those cases we have
$z_0^{(i)}=z_1^{(j)}$. Since
$$\left(z_0^{(i)}\right)^2-1=d_k\left(-2\left(x_0^{(i)}\right)^2-1\right),$$
it follows that
$$\delta:=\frac{\left(z_0^{(i)}\right)^2-1}{d_k}$$ is an integer. Furthermore,
$$ \delta+1=-2\left(x_0^{(i)}\right)^2, \qquad 3\delta+1=-2\left(y_1^{(j)}\right)^2, \qquad \delta d_k+1=\left(z_0^{(i)}\right)^2.$$
Thus $\delta$ satisfies system \eqref{e.110} and hence
$\delta=d_l$ for some $l\geq 0$. Moreover, $\{1, 3, d_k, d_l\}$ is a
Diophantine quadruple in $\Z\left[\sqrt{-2}\right]$ since $d_l\neq d_k$.
Indeed, if $d_l=d_k$ then 
$$d_k^2+1=\left(z_0^{(i)}\right)^2,$$
contradiction with $d_k^2\equiv 1 \pmod {4}$. In what follows we show that $l=k-1$.
Assume $\delta>d_{k-1}$, that is $l<k-1$.
Then the triple $\{1, 3, d_l\}$ can be extended to a Diophantine
quadruple in $\Z\left[\sqrt{-2}\right]$ by $d_k$, which differs from $d_{l-1}$
and $d_{l+1}$ since $l-1<l+1<k$ by assumption; this contradicts
the minimality of $k$. Therefore $l\geq k-1$. On the other hand, since
$$\delta d_k+1=\left(z_0^{(i)}\right)^2\leq -d_k(-d_k+1),$$
from Proposition
\ref{p2} it follows that $\delta=d_l> d_k -1$ and hence $l\leq k$.
Since $d_l\neq d_k$ we have $d_l=d_{k-1}$. Hence
$$\left(z_0^{(i)}\right)^2=d_kd_{k-1}+1.$$
From \eqref{e.100} we obtain $z_0^{(i)}=z_0=c_{k-1}+2.$
Furthermore, from \eqref{e.120},
\eqref{e.121} and \eqref{e.122} we get $\left|{x_0^{(i)}}\right|=s_{k-1}$ and
$\left|{y_1^{(j)}}\right|=t_{k-1}$. Moreover, from
\begin{align*}
s_k&=\frac{1}{2\sqrt{3}}\left(\left(2+\sqrt{3}\right)^k-\left(2-\sqrt{3}\right)^k\right),\\
t_k&=\frac{1}{2}\left(\left(2+\sqrt{3}\right)^k+\left(2-\sqrt{3}\right)^k\right),
\end{align*}
we get
\begin{equation}\label{e.150}
2s_ks_{k-1}=c_{k-1},\qquad 2t_kt_{k-1}=3c_{k-1}+4.
\end{equation} 
This brings us to the important conclusion. If the system of Pellian
equations \eqref{e.120} and \eqref{e.121} has a solution in positive integers, where $k$ is the smallest integer
for which Theorem \ref{trm1} does not hold and under assumption $k\geq 6$, the fundamental
solutions of Pellian equations \eqref{e.120} and \eqref{e.121} are
$(z_0, x_0^{\pm})$ and  $(z_1, y_1^{\pm})$ respectively, where
\begin{equation}\label{fun1}
z_0=z_1=2(s_ks_{k-1}+1),
\end{equation}
\begin{equation}\label{fun2}
x_0^{\pm}=\pm s_{k-1},\quad y_1^{\pm}=\pm t_{k-1}.
\end{equation}

\section{The lower bound for $m$ and $n$}\label{sc4}

After plugging \eqref{fun1} and \eqref{fun2} into \eqref{niz1} and \eqref{niz2} and expanding we get
\begin{align*}
\nu_m^{\pm}&=\frac{1}{2} \left(2(s_ks_{k-1}+1)  \pm s_{k-1}\sqrt {-2d_k}\right) \left(-2d_{k}-1 +
2s_k \sqrt{-2d_k}\right)^m \\
&+\frac{1}{2} \left(2(s_ks_{k-1}+1) \mp s_{k-1}
\sqrt{-2d_k}\right)\left(-2d_{k}-1 - 2s_k \sqrt{-2d_k}\right)^m,
\end{align*}
and
\begin{align*}
\omega_n^{\pm}&=\frac{1}{2\sqrt{3}} \left(2(s_ks_{k-1}+1)\sqrt{3} \pm
t_{k-1}\sqrt{-2d_k}\right)\left(-6d_{k}-1+ 2t_k\sqrt{-6d_k}\right)^n\\
&+\frac{1}{2\sqrt{3}} \left(2(s_ks_{k-1}+1)\sqrt{3} \mp
t_{k-1}\sqrt{-2d_k}\right) \left(-6d_{k}-1- 2t_k\sqrt{-6d_k}\right)^n,
\end{align*}
for $m, n\geq 0$. One intersection of these sequences is clearly
$$\nu_0^{\pm}=\omega_0^{\pm}=2(s_ks_{k-1}+1),$$ 
wherefrom it follows that the triple $\{1,3,d_k\}$ can be extended to a
Diophantine quadruple in $\Z\left[\sqrt{-2}\right]$ by $d_{k-1}$. Another intersection
is $\nu_1^{-}=\omega_1^{-}$. Indeed, \eqref{e.150} implies
\begin{equation}\label{e.151}
s_ks_{k-1}+1=\frac{1}{3} \left(t_kt_{k-1}+1\right)
\end{equation} 
and hence
\begin{align*}
\omega_1^{-}&=-2-12d_k-2s_{k}s_{k-1}-12d_ks_{k}s_{k-1}+4d_kt_{k}t_{k-1}\\
&=-2-4d_k-2s_{k}s_{k-1}=\nu_1^{-}.
\end{align*}
Therefrom it follows that the triple $\{1,3,d_k\}$ can be extended to a
Diophantine quadruple in $\Z\left[\sqrt{-2}\right]$ by $d_{k+1}$.
Using \eqref{e.151} we can write $\omega_n^{\pm}$ as follows
\begin{align*}
\omega_n^{\pm}&=\frac{1}{6} \left(2(t_kt_{k-1}+1) \pm t_{k-1}\sqrt{-6d_k}\right)
\left(-6d_{k}-1+ 2t_k\sqrt{-6d_k}\right)^n\\
&+\frac{1}{6} \left(2(t_kt_{k-1}+1) \mp
t_{k-1}\sqrt{-6d_k}\right) \left(-6d_{k}-1- 2t_k\sqrt{-6d_k}\right)^n.
\end{align*}
Since
\begin{eqnarray*}
2(s_ks_{k-1}+1)-s_{k-1}\sqrt
{-2d_k}&=&2-\frac{\sqrt{-2d_{k-1}-2}}{\sqrt{-2d_k-2}+\sqrt{-2d_k}}\\
&>&2-\frac{\sqrt{-2d_{k}-2}}{\sqrt{-2d_k-2}+\sqrt{-2d_k}}>1,
\end{eqnarray*}
it follows that
$$ \nu_m^{+}\geq\nu_m^{-}>\frac{1}{2} \left(-2d_k-1 + 2s_k\sqrt{-2d_k}\right)^m.$$
Furthermore,
$$\omega_n^{-}\leq\omega_n^{+} < \frac{1}{2} \left(-6d_k-1+2t_k\sqrt{-6d_k}\right)^{n+1},$$
since
\begin{align*}
2(t_kt_{k-1}+1) - t_{k-1}\sqrt {-6d_k} < \left(\frac{-6d_{k}-1 - 2t_k\sqrt{-6d_k}}{-6d_{k}-1+ 2t_k\sqrt{-6d_k}}\right)^n
\end{align*}
and
$$\frac{1}{3} \left(2(t_kt_{k-1}+1) + t_{k-1}\sqrt {-6d_k} +1\right) <-6d_k-1+2t_k\sqrt{-6d_k},$$
which can be easily verified using \eqref{e.122}.
Therefore, if one of the equations $\nu_m^{\pm}=\omega_n^{\pm}$
has solutions, then
$$\frac{1}{2} \left(-2d_k-1 + 2s_k\sqrt{-2d_k}\right)^m<\frac{1}{2} \left(-6d_k-1+2t_k\sqrt{-6d_k}\right)^{n+1},$$
wherefrom
$$\frac{m}{n+1}< \frac{\log \left(-6d_k-1+2t_k\sqrt{-6d_k}\right)}{\log \left(-2d_k-1+2s_k\sqrt{-2d_k}\right)}.$$
The expression on the right side of the inequality decreases when
$k$ increases. Since $k\geq 6$ it follows that 
$$\frac{m}{n+1} < 1.072.$$
We may assume $n\geq 2$. Indeed for $n=1$ we have
$m\leq 2$ and since $m$ and $n$ are both even or both odd it follows
that the only possibility is $m=1$. We have already established the
intersection $\nu_1^{-}=\omega_1^{-}$ and  it can be easily verified
that $\nu_1^{+}\not=\omega_1^{\pm}$ and
$\nu_1^{-}\not=\omega_1^{+}$. Now it can be easily deduced that
$m<n\sqrt{3}$. Hence, if the sequences $(\nu_m^{\pm})$ and
$(\omega_n^{\pm})$ have any intersections besides two already
established ones, then $n\geq 2$, $m$ and $n$ are of the same parity and
$m<n\sqrt{3}$. We further on assume these conditions.

\begin{proposition}
Let $n\geq 2$. If one of the equations  $\nu_m^{\pm}=\omega_n^{\pm}$
has solutions then 
$$m\geq n \geq \frac{2}{3}\cdot \sqrt[4]{-d_k}.$$
\end{proposition}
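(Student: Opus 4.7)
The plan is to extract both inequalities from a single congruence obtained by combining Lemma~\ref{l3} with the facts $z_0 = z_1$ (from Section~\ref{sc3}) and $m \equiv n \pmod{2}$ (Lemma~\ref{l4}). Subtracting the two congruences of Lemma~\ref{l3} and dividing by $2 d_k$, the hypothesis $\nu_m^\pm = \omega_n^\pm$ forces
\[
E := (m^2 - 3 n^2)\,z_0 + 2 s_k m x_0 - 2 t_k n y_1 \equiv 0 \pmod{-4 d_k}.
\]
Substituting $x_0 = \pm s_{k-1}$, $y_1 = \pm t_{k-1}$ together with $2 s_k s_{k-1} = z_0 - 2$ and $2 t_k t_{k-1} = 3 z_0 - 2$ (from \eqref{e.150} and \eqref{e.151}), $E$ takes one of four explicit quadratic forms in $m$ and $n$ whose coefficients depend only on $z_0$.

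For $m \geq n$: I compare the closed-form growth rates of the two sequences. The same analysis that produced $m/(n+1) < 1.072$ above, applied with the opposite direction of inequalities (lower bound on $\omega_n$ and upper bound on $\nu_m$), yields $m \log \alpha \geq n \log \beta + O(1)$ where $\alpha = -2 d_k - 1 + 2 s_k \sqrt{-2 d_k}$ and $\beta = -6 d_k - 1 + 2 t_k \sqrt{-6 d_k}$. Since $\beta / \alpha \to 3$ as $k \to \infty$, this forces $m > n$ for $k \geq 6$, and the parity constraint $m \equiv n \pmod{2}$ promotes this to $m \geq n$.

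For $n \geq \tfrac{2}{3} \sqrt[4]{-d_k}$: set $D = -d_k$ and assume for contradiction that $n < \tfrac{2}{3} D^{1/4}$. Combining $m \geq n$ with $m < n \sqrt 3$ gives $|m^2 - 3 n^2| \leq 2 n^2$. Using the explicit formula $z_0 = (D + D_1 + 4)/4$ with $D_1 := -d_{k-1}$, and the sharp asymptotic $z_0 \sim D(2 - \sqrt 3)$ coming from the closed-form expressions for $s_k$ and $d_k$, together with $|2 s_k m x_0| \leq m (z_0 - 2)$ and $|2 t_k n y_1| \leq n (3 z_0 - 2)$, I estimate
\[
|E| \leq 2 z_0 n(n+1) + 2 n (\sqrt 3 - 1),
\]
and verify that under the assumed bound on $n$ one has $|E| < -4 d_k$. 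The congruence then forces $E = 0$. Case-by-case analysis of $E = 0$ in each of the four sign variants reduces, on dividing by $z_0$ and tracking the linear correction, to a Pell-like equation of the form $(2 m \pm 1)^2 = 3 (2 n \pm 1)^2 \pm 2$; the only integer solutions compatible with $m \equiv n \pmod{2}$ and $n \leq m < n \sqrt 3$ are $(m, n) = (0, 0)$ and $(m, n) = (1, 1)$, which correspond to the already-exhibited intersections $\nu_0^\pm = \omega_0^\pm$ and $\nu_1^- = \omega_1^-$. This contradicts the assumption of a new intersection.

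The main obstacle is calibrating the strict bound $|E| < -4 d_k$ throughout the range $n < \tfrac{2}{3} D^{1/4}$: this is where the constant $\tfrac{2}{3}$ arises, emerging from the tight asymptotic $z_0 \sim D(2 - \sqrt 3)$ (rather than the crude $z_0 \leq \sqrt{-d_k(1-d_k)}$ of Proposition~\ref{p2}) together with careful bookkeeping of the lower-order contributions from $2 s_k m x_0$ and $2 t_k n y_1$.
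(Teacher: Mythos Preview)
Your approach for the second inequality has a fatal size estimate. You claim that under $n < \tfrac{2}{3}D^{1/4}$ one gets $|E| < -4d_k = 4D$, where
\[
E = (m^2 - 3n^2)z_0 + 2 s_k m x_0 - 2 t_k n y_1.
\]
But $z_0 = c_{k-1}+2$ is of order $D$ (indeed $z_0 \sim (2-\sqrt3)D$, as you yourself note), while $|m^2 - 3n^2|$ can be as large as $2n^2 \sim \tfrac{8}{9}\sqrt D$. Thus the dominant term $|m^2-3n^2|\,z_0$ is of order $D^{3/2}$, not $D$, and the inequality $|E| < 4D$ already fails for $n=3$ when $k=6$. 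The ``tight asymptotic $z_0 \sim D(2-\sqrt3)$'' does not rescue this: the point is precisely that $z_0$ is linear in $D$, which is too large. (Your formula $z_0 = (D + D_1 + 4)/4$ is also incorrect; one has $z_0^2 = D D_1 + 1$.) Consequently the congruence $E \equiv 0 \pmod{-4d_k}$ does \emph{not} force $E=0$, and the subsequent reduction to a Pell-like equation in $m,n$ never gets off the ground.

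What the paper does instead is exactly the missing idea: since $z_0^2 = d_k d_{k-1} + 1 \equiv 1 \pmod{-d_k}$, one multiplies the congruence through by $z_0$ (equivalently, squares it) to obtain
\[
(m^2 - 3n^2 + m - 3n)^2 \equiv 4(m-n)^2 \pmod{-4d_k},
\]
which now has \emph{both} sides bounded by $4D$ under $n < \tfrac{2}{3}D^{1/4}$, yielding an equality. This equality forces $m>n$, and substituting back into the original congruence, followed by a second elimination (multiplying by $s_k$ and using $-2s_k^2 \equiv 1 \pmod{-d_k}$), produces $(m-n)(3s_k - s_{k-1}) \equiv 0 \pmod{-2d_k}$, whose left side is $O(D^{3/4})$ and hence must vanish --- a contradiction. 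The constant $\tfrac{2}{3}$ comes out of \emph{these} bounds, not from the direct bound on $E$.

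Your argument for $m \geq n$ via growth rates is also not quite complete: the asymptotic $m\log\alpha \approx n\log\beta$ with $\log\beta/\log\alpha \to 1^+$ as $k\to\infty$ does not by itself rule out $m = n-2$ for small $n$; the paper handles this cleanly by the monotone induction $\nu_m^+ < \omega_{m+2}^-$.
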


\begin{proof} 
If $m<n$, then $m\leq n-2$,
since $m$ and $n$ are of the same parity. From \eqref{niz1} and
\eqref{niz2} using \eqref{e.150} one easily finds $\nu_0^{+}<
\omega_2^{-}$. It can be shown by induction that
$\nu_m^{+}<\omega_{m+2}^{-}$ for $m\geq 0$. Indeed, sequences
$(\nu_m^{\pm})$ and $(\omega_n^{\pm})$ are strictly increasing
positive sequences, which can be easily checked by induction after plugging
\eqref{fun1} and \eqref{fun2} into \eqref{niz1} and \eqref{niz2}.
Hence 
$$\nu_{m+1}^{+}<(-4d_k-2)\nu_m^{+},\qquad \omega_{m+3}^{-}>(-12d_k-3)\omega_{m+2}^{-}.$$
Then clearly $\nu_m^{+}<\omega_{m+2}^{-}$ implies
$\nu_{m+1}^{+}<\omega_{m+3}^{-}$, which completes the proof by
induction. Since 
$$\nu_m^{-}\leq\nu_m^{+}<\omega_{m+2}^{-}\leq\omega_{m+2}^{+},$$
it follows that if one of the equations
$\nu_m^{\pm}=\omega_n^{\pm}$ has solutions, then $m+2>n$, a
contradiction. Hence $m\geq n$. For the second part of the statement assume to the contrary that
$n<\frac{2}{3}\sqrt[4]{-d_k}$. Let us show how we can reach a contradiction in the case 
$\nu_m^{+}=\omega_n^{+}$. Other three case
can be similarly resolved. 

Since $m$ and $n$ are of the same parity,
Lemma \ref{l3} implies that if  $\nu_m^{+}=\omega_n^{+}$, then
\begin{equation}\label{e.200}
(c_{k-1}+2) (m^2-3n^2+m-3n)\equiv 2(m-n) \pmod {-4d_k},
\end{equation}
and since \eqref{e.100} implies $(c_{k-1}+2)^2\equiv 1 \pmod {-d_k},$
we obtain
$$(m^2-3n^2+m-3n)^2\equiv 4 (m-n)^2 \pmod {-d_k}.$$
Moreover
\begin{equation}\label{e.202}
(m^2-3n^2+m-3n)^2 \equiv 4(m-n)^2 \pmod {-4d_k}
\end{equation}
since $(4, d_k)=1$ and both sides of the congruence relation are
divisible by 4, since $m$ and $n$ are of the same parity. Under assumption
$n<\frac{2}{3}\sqrt[4]{-d_k}$ one easily sees that the expressions
on both sides of the congruence relation \eqref{e.202} are strictly
smaller than $-4d_k$. Indeed, 
\begin{align*}
0 \leq 2(m-n)\leq 2n\left(\sqrt{3}-1\right)
<2\left(\sqrt{3}-1\right)\frac{2}{3}\sqrt[4]{-d_k}<\sqrt{-4d_k}
\end{align*}
and 
\begin{align*}
0<-m^2+3n^2-m+3n\leq 2n^2+2n\leq 3n^2<\frac{12}{9}\sqrt{-d_k}<\sqrt{-4d_k}.
\end{align*}
Therefore $-m^2+3n^2-m+3n =2(m-n)$, wherefrom clearly $m\neq n,$ so $m>n$. From \eqref{e.200} we obtain
$$-(c_{k-1} +2) \cdot 2(m-n)\equiv 2(m-n) \pmod {-4d_k},$$
wherefrom
$$ -2s_ks_{k-1} (m-n) \equiv 3(m-n) \pmod {-2d_k}.$$
Since \eqref{e.122} implies $-2s_k^2\equiv 1 \pmod {-d_k}$, by multiplying both sides of the previous equation by $s_k$
we obtain 
$$s_{k-1}(m-n)\equiv 3s_k(m-n)\pmod {-d_k},$$
and since $2\mid m-n$ and $(d_k, 2)=1$, it follows that
\begin{equation}\label{e.204}(m-n) 
(3s_k-s_{k-1}) \equiv 0\pmod{-2d_k}.
\end{equation}
On the other hand, from
$$0<m-n<n\left(\sqrt{3}-1\right)<\left(\sqrt{3}-1\right)\frac{2}{3}\sqrt[4]{-d_k}<0.49\cdot \sqrt[4]{-d_k}$$
and
$$0< 3s_k - s_{k-1}\leq 3s_k=3\cdot\sqrt{\frac{-d_k-1}{2}}<3 \cdot \sqrt {\frac{-d_k}{2}}$$
it follows that
$$ 0<(m-n) (3s_k-s_{k-1})< 1.04\cdot\sqrt[4]{-d_k^3}<-2d_k.$$
Therefore, we have a contradiction with \eqref{e.204}. Completely
analogously a contradiction can be obtained in other three cases, i.e\@ when
$\nu_m^{+}=\omega_n^{-}$, $\nu_m^{-}=\omega_n^{+}$ and
$\nu_m^{-}=\omega_n^{-}$.
\end{proof}

\section{Application of Bennett's theorem}\label{sc5}

\begin{lemma}\label{l.5} Let
$$\theta_1=\sqrt {1+\frac{1}{d_k}}, \qquad \theta_2=\sqrt {1+\frac{1}{3d_k}}$$
and let $(x, y, z)$ be a solution in positive integers of the system of
Pellian equations \eqref{e.120} and \eqref{e.121}. Then
$$\max \left \{\left |\theta_1-\frac{6s_kx}{3z}\right |, \left|\theta_2-\frac{2t_ky}{3z}\right | \right \}<(1-d_k)z^{-2}.$$
\end{lemma}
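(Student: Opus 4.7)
The plan is to handle both bounds by the same conjugate-multiplication trick. For the $\theta_1$ term, rewrite $\theta_1 - \frac{6s_k x}{3z} = (z\theta_1 - 2s_k x)/z$ and compute the product
$$(z\theta_1 - 2s_k x)(z\theta_1 + 2s_k x) = z^2\theta_1^2 - 4s_k^2 x^2.$$
Substituting $\theta_1^2 = 1 + 1/d_k$ and $4s_k^2 = -2(d_k+1)$ from \eqref{e.122}, this regroups as $(z^2 + 2d_k x^2)(1 + 1/d_k)$, and by the Pellian equation \eqref{e.120} equals $(1-d_k)\theta_1^2$. Since $(1-d_k)\theta_1^2 > 0$ and $2s_k x > 0$, this forces $z\theta_1 > 2s_k x > 0$, giving
$$|z\theta_1 - 2s_k x| = \frac{(1-d_k)\theta_1^2}{z\theta_1 + 2s_k x}.$$
The desired bound $|z\theta_1 - 2s_k x| < (1-d_k)/z$ now reduces to $z\theta_1^2 < z\theta_1 + 2s_k x$, which is immediate since $\theta_1 < 1$ and $s_k x > 0$.

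The second bound follows by the same argument with $3z\theta_2$ playing the role of $z\theta_1$. Using $4t_k^2 = -2(3d_k+1)$ from \eqref{e.122} together with the Pellian \eqref{e.121}, one obtains the analogous identity
$$(3z\theta_2 - 2t_k y)(3z\theta_2 + 2t_k y) = (3z^2 + 2d_k y^2)\bigl(3 + \tfrac{1}{d_k}\bigr) = 3(3-d_k)\theta_2^2,$$
so $|3z\theta_2 - 2t_k y| = 3(3-d_k)\theta_2^2 / (3z\theta_2 + 2t_k y)$. The target $|3z\theta_2 - 2t_k y| < 3(1-d_k)/z$ then reduces, after dropping the positive $2t_k y$ on the right, to the scalar inequality
$$\theta_2 \cdot \frac{3-d_k}{1-d_k} < 3.$$
This holds since $\theta_2 < 1$ and $(3-d_k)/(1-d_k) = 1 + 2/(1-d_k) < 3$ whenever $d_k < 0$.

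The substantive content is spotting the two factorizations $(z\theta_1)^2 - (2s_k x)^2 = (1-d_k)\theta_1^2$ and $(3z\theta_2)^2 - (2t_k y)^2 = 3(3-d_k)\theta_2^2$; once these are in hand, both bounds follow by the same elementary chain. I do not anticipate any real obstacle, only the mild nuisance of sign-tracking since $d_k$ is negative.
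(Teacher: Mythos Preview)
Your proof is correct and is essentially the same as the paper's: both use the conjugate-multiplication identity (the paper via the explicit form $\theta_1 = 2s_k/\sqrt{-2d_k}$, $\theta_2 = 2t_k/\sqrt{-6d_k}$ and the factorization $z\mp x\sqrt{-2d_k}$, you via $\theta_i^2$ directly), then drop a positive term in the denominator and invoke $\theta_i<1$ to finish. The only cosmetic difference is that the paper bounds the second term by $\tfrac{3-d_k}{3}\,z^{-2}<(1-d_k)z^{-2}$ in one step, whereas you reduce to the equivalent scalar inequality $\theta_2\cdot\frac{3-d_k}{1-d_k}<3$.
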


\begin{proof}
Clearly $\theta_1=\frac{2s_k}{\sqrt{-2d_k}}$
and $\theta_2=\frac{2t_k}{\sqrt{-6d_k}}$. Hence,
\begin{align*}
\left|\theta_1-\frac{6s_kx}{3z}\right| &= \left|\frac{2s_k}{\sqrt{-2d_k}}-\frac{2s_kx}{z}\right|= 2s_k\left|
\frac{z-x\sqrt{-2d_k}}{z\sqrt{-2d_k}}\right| \\
&=\frac{2s_k}{z\sqrt{-2d_k}}\cdot \frac{1-d_k}{z+x\sqrt{-2d_k}}<\frac {2s_k(1-d_k)}{\sqrt{-2d_k}}\cdot z^{-2}\\
&<(1-d_k)\cdot z^{-2}.
\end{align*}
and
\begin{align*}
\left|\theta_2-\frac{2t_ky}{3z}\right| &= \left|\frac{2t_k}{\sqrt{-6d_k}}-\frac{2t_ky}{3z}\right|=
\frac{2t_k}{\sqrt{3}}\left|\frac{z\sqrt{3}-y\sqrt{-2d_k}}{z\sqrt{-2d_k}\sqrt{3}}\right|\\
&=\frac{2t_k}{3z\sqrt{-2d_k}}\cdot \frac{3-d_k}{z\sqrt{3}+y\sqrt{-2d_k}}\\ &<\frac
{2t_k(3-d_k)}{3\sqrt{-6d_k}}\cdot z^{-2}<\frac{3-d_k}{3}\cdot z^{-2}<(1-d_k)\cdot z^{-2}.
\end{align*}
\end{proof}

In order to establish the lower bound for the expression in Lemma \ref{l.5} 
we use the following result of Bennett \cite{B98} on simultaneous rational
approximations of square roots of rationals which are close to $1$.

\begin{theorem}\label{Bennett}
If $a_i, p_i, q$ and $N$ are integers for $0\leq i \leq 2$ with
$a_0<a_1<a_2$, $a_j=0$ for some $0\leq j \leq 2$, $q$ nonzero and
$N>M^9$ where
$$M=\max \{|a_i|:0\leq i \leq 2\},$$
then we have
$$\max_{0\leq i \leq 2} \left \{\left|\sqrt{1+\frac{a_i}{N}} - \frac{p_i}{q}\right |\right \}>(130N \gamma)^{-1}q^{-\lambda}$$
where
$$\lambda = 1+\frac{\log (33N\gamma)}{\log\left (1.7N^2 \prod_{0\leq i<j\leq 2}(a_i-a_j)^{-2}\right)}$$
and
\begin{center}
$\gamma=\left\{ \begin{array}{r@{\,,\ }l}
\frac{(a_2-a_0)^2(a_2-a_1)^2}{2a_2-a_0-a_1} & \quad a_2-a_1\geq a_1-a_0\\
\frac{(a_2-a_0)^2(a_1-a_0)^2}{a_1+a_2-2a_0} & \quad a_2-a_1<a_1-a_0.
\end{array} \right.$
\end{center}
\end{theorem}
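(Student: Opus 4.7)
The statement is Bennett's theorem on simultaneous rational approximation to square roots close to $1$, so the only honest approach is to follow the hypergeometric (Thue--Siegel) method as implemented by Rickert and sharpened by Bennett. The starting point is to construct, for each positive integer $n$ and each $i\in\{0,1,2\}$, Padé-type polynomials $P_{i,n}(z),\,Q_n(z)$ (with a common denominator cleared) such that the three remainders
$$E_{i,n}(z):=Q_n(z)\sqrt{1+a_i z}-P_{i,n}(z)$$
vanish to order $\ge 2n+1$ at $z=0$. These polynomials are built from the hypergeometric series ${}_2F_1(-n,-n-\tfrac12;\,-2n;\,a_i z)$ with a normalization chosen so that $Q_n$ is independent of $i$; a contour integral representation of Chudnovsky type yields the analytic bound $|E_{i,n}(1/N)|\ll C_1^n N^{-n-1}$ together with the height bound $|P_{i,n}(1/N)|,|Q_n(1/N)|\ll C_2^n$, where $C_1,C_2$ depend only on $M$ and on the spacings $a_i-a_j$.

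Specializing at $z=1/N$ produces rational approximations $P_{i,n}(1/N)/Q_n(1/N)$ to $\sqrt{1+a_i/N}$. To convert this into an effective lower bound on $\max_i|\sqrt{1+a_i/N}-p_i/q|$, I would run the classical two-approximation / determinant argument: given a hypothetical very good simultaneous approximation $(p_0,p_1,p_2,q)$, consider two consecutive levels $n$ and $n+1$ of the Padé construction and form a $2\times 2$ integer determinant whose entries pair the hypothetical approximation against the Padé approximants for some pair of indices $i,j$. The contiguity relations of ${}_2F_1$ guarantee that for at least one choice of $i,j$ this determinant is nonzero, so its absolute value is $\ge 1$; combining this with the analytic and height estimates above yields an inequality tying $q$ to $n$. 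Optimizing over the free parameter $n$ produces the exponent $\lambda$ displayed in the theorem, and the hypothesis $N>M^9$ is calibrated precisely so that the resulting $\lambda$ is strictly less than $2$, which is the threshold at which the estimate becomes nontrivial.

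The main obstacle, and the part that takes real work, is the arithmetic sharpening of the denominator bound. The naive common denominator of the coefficients of $P_{i,n}$ and $Q_n$ grows like $\binom{2n}{n}^{O(1)}$; to obtain the explicit constants $130$, $33$, $1.7$ in the statement and the piecewise form of $\gamma$, one needs Chudnovsky-style cancellations among binomial coefficients, removing all primes in certain explicit intervals and saving a factor of size $e^{(1+o(1))n}$ in the denominator. Making these savings quantitative, and tracking their dependence on the spacings $a_i-a_j$ (which is what ultimately produces the two cases in the definition of $\gamma$), is the delicate step. Once it is in place, the remaining work is a careful but essentially mechanical optimization of the parameter $n$ and bookkeeping of the constants, combined with the analytic contour estimate for $E_{i,n}(1/N)$ to deliver the stated inequality.
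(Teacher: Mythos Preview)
Your sketch is a reasonable high-level outline of the hypergeometric/Pad\'e method underlying Bennett's result, but note that the paper does not prove this theorem at all: it is quoted verbatim from Bennett~\cite{B98} as an external input and then applied with the specific choices $N=-3d_k$, $a_0=-3$, $a_1=-1$, $a_2=0$. So there is no ``paper's own proof'' to compare against; the authors simply invoke the statement. What you have written is, in effect, a sketch of Bennett's original argument (building on Rickert's Pad\'e construction and the Chudnovsky denominator savings), which is the correct provenance, but it goes well beyond what this paper requires or supplies. For the purposes of this paper, a one-line citation to~\cite{B98} is all that is expected.
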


We can apply Theorem \ref{Bennett} with
\begin{align*}
N&=-3d_k, & a_0&= -3, & a_1&=-1, & a_2&=0,\\
M&=3, & q&=3z, & p_1&=6s_kx, & p_2&=2t_ky,
\end{align*}
since $N=-3d_k>3^9$ for $k\geq 6$.
So,
$$\max \left \{ \left |\theta_1-\frac{6s_kx}{3z}\right|, \left |\theta_2-\frac{2t_ky}{3z}\right |\right \}>\left(130\cdot (-3d_k)\gamma\right )^{-1}\cdot (3z)^{-\lambda},$$
where
$$\gamma=\frac{36}{5}, \qquad \lambda=1+\frac{\log\left (-99d_k\cdot \frac{36}{5}\right )}{\log \left (1.7 \cdot 9d_k^2\cdot \frac{1}{36}\right)}.$$
From Lemma \ref{l.5} we get
$$z^{-\lambda +2}<(1-d_k)\left(130\cdot \left(-3d_k\right)\cdot \frac{36}{5}\right)\cdot 3^\lambda.$$
Since $\lambda < 2$ and $-d_k(1-d_k)<1.000000821d_k^2$ for $k\geq 6$, it
follows that $z^{-\lambda +2}<25272.03d_k^2$
and hence
$$(-\lambda +2) \log z< \log \left(25272.03d_k^2\right).$$
Since
$$ \frac{1}{2-\lambda}=
\frac{1}{1-\frac{\log\left(-99d_k\cdot \frac{36}{5}\right)}{\log \left(1.7\cdot
9d_k^2\cdot \frac{1}{36}\right)}}\leq \frac{\log \left(0.425d_k^2\right)}{\log
(-0.00059d_k) }$$ we have
\begin{equation}\label{e.500}
\log z < \frac{\log \left(25272.03 d_k^2\right) \log \left(0.425d_k^2\right)}{\log
(-0.00059d_k)}. 
\end{equation} 
Furthermore, since $z=\nu_m^{\pm}$ for
some $m\geq 0$, it follows that
$$z>\frac{1}{2} \left(-2d_k-1+2s_k\sqrt{-2d_k}\right)^m.$$
Since $2s_k\sqrt{-2d_k} > -2d_k-2$ for $k\geq 0$ it follows that
$$z>\frac{1}{2} \left(-4d_k-3\right)^m.$$
From  $(-4d_k-3)^{-1}<\frac{1}{2}$ for $k\geq 1$, we get $z>(-4d_k-3)^{m-1}.$
Therefore,
$$\log z > (m-1) \log (-4d_k-3),$$
and since $m\geq n \geq \frac{2}{3}\cdot \sqrt[4]{-d_k}$, it follows that
$m-1>0.5\cdot \sqrt[4]{-d_k}$ and hence
$$\log z > 0.5\cdot \sqrt[4]{-d_k}\cdot \log (-4d_k-3).$$
Using \eqref{e.500} we obtain
$$\sqrt[4]{-d_k}<\frac{\log \left(25272.03 d_k^2\right) \log \left(0.425d_k^2\right)}{0.5\cdot
\log (-0.00059d_k) \log (-4d_k-3)}.$$
The expression on the right side of the inequality decreases when
$k$ increases, and hence by substituting $k=6$ we obtain
$$\sqrt[4]{-d_k}<20.477$$
and finally
$$-d_k<175\,817.$$
This implies $k\leq 5$, which contradicts the assumption $k\geq 6$.
Therefore, the minimal integer $k$ for which Theorem \ref{trm1} does
not hold, if such exists, is smaller than $6$.

\section{Small cases}\label{sc6}
To complete the proof it remains to show that Theorem \ref{trm1} holds also for $0\leq k\leq 5$.
In each case we have to solve a system of Pellian equations where
one of the equations is always the Pell's equation 
$$y^2-3x^2=1$$
and the second one is as follows
\begin{itemize}
\item if $k=0$ \quad $z^2-2x^2=2,$
\item if $k=1$ \quad $z^2-6x^2=4$,
\item if $k=2$ \quad $z^2-22y^2=12$,
\item if $k=3$ \quad $z^2-902x^2=452$,
\item if $k=4$ \quad $z^2-4182y^2=2092$ ,
\item if $k=5$ \quad $z^2-58242y^2=29122$.
\end{itemize}
All the solutions in positive integers of $y^2-3x^2=1$ are given by
$(x,y)=(x_m',y_m')$, where
\begin{align*}
x_m'&=\frac{1}{2\sqrt{3}}\left((2+\sqrt{3})^m-(2-\sqrt{3})^m\right),\\
y_m'&=\frac{1}{2}\left((2+\sqrt{3})^m+(2-\sqrt{3})^m\right)
\end{align*}
and $m\geq0$.  Likewise, we can find a sequence of solutions for any of the 
equations listed above. The above systems can be reduced to finding the
intersections of $(x_m')$ and following sequences:
\begin{align*}
k=0: \ \ x_n=&\frac{1+\sqrt{2}}{2}(3+2\sqrt{2})^n+\frac{1-\sqrt{2}}{2}(3-2\sqrt{2})^n,\\
k=1: \ \ x_n=&\frac{1}{\sqrt{6}}(5+2\sqrt{6})^n-\frac{1}{\sqrt{6}}(5-2\sqrt{6})^n,\\
k=3: \ \ x_n^{\pm}=&\pm\frac{61+2\sqrt{902}}{\sqrt{902}}(901\pm30\sqrt{902})^n,\\
&\mp\frac{61-2\sqrt{902}}{\sqrt{902}}(901\mp 30\sqrt{902})^n
\end{align*}
that is  to finding the intersections of $(y_m')$ and following sequences:
\begin{align*}
k=2: \ \  y_n^{\pm}=&\pm\frac{5+\sqrt{22}}{\sqrt{22}}(197\pm42\sqrt{22})^n
\mp\frac{5-\sqrt{22}}{\sqrt{22}}(197\mp 42\sqrt{22})^n,\\
k=4: \ \ y_n^{\pm}=&\pm\frac{841+13\sqrt{4182}}{\sqrt{4182}}(37637\pm582\sqrt{4182})^n
\mp\\
&\mp\frac{841-13\sqrt{4182}}{\sqrt{4182}}(37637\mp582\sqrt{4182})^n,\\
k=5: \ \ y_n^{\pm}=&\pm\frac{23419+97\sqrt{58241}}{2\sqrt{58241}}(524177\pm2172\sqrt{58241})^n\mp\\
&\mp\frac{23419-97\sqrt{58241}}{2\sqrt{58241}}(524177\mp2172\sqrt{58241})^n,
\end{align*}
with $n\geq 0$. In what follows, we will briefly resolve the case $k=1$, 
so to demonstrate a method based on Baker's theory on linear forms in logarithms. 

If $k=1$ the problem reduces to finding the intersection of sequences
\begin{align*}
x_m'&=\frac{1}{2\sqrt{3}}\left((2+\sqrt{3})^m-(2-\sqrt{3})^m\right)\\
x_n&=\frac{1}{\sqrt{6}}\left((5+2\sqrt{6})^n-(5-2\sqrt{6})^n\right)
\end{align*}
Clearly $x_0'=x_0=0$ and $x_2'=x_1=4$. We have to
show that there are no other intersections. Assume $m,n\geq 3$ and $x_m'=x_n$. Setting
$$P=\frac{1}{2\sqrt{3}}(2+\sqrt{3})^m,\quad Q=\frac{1}{\sqrt{6}}(5+2\sqrt{6})^n,$$
we have
$$ P-\frac{1}{12}P^{-1}=Q-\frac{1}{6}Q^{-1}.$$
Since
$$Q-P=\frac{1}{6}Q^{-1}-\frac{1}{12}P^{-1}>\frac{1}{6}(Q^{-1}-P^{-1})=\frac{1}{6}P^{-1}Q^{-1}
(P-Q),$$ we have $Q>P$. Furthermore, from
$$\frac{Q-P}{Q}=\frac{1}{6}Q^{-1}P^{-1}-\frac{1}{12}P^{-2}<
\frac{1}{6}Q^{-1}P^{-1}+\frac{1}{12}P^{-2}<0.25P^{-2}$$ 
we get
\begin{align*}
0<\log\frac{Q}{P} &= -\log\left(1-\frac{Q-P}{Q}\right)<\frac{Q-P}{Q}+\left(\frac{Q-P}{Q}\right)^2\\
&<\frac{1}{4}P^{-2}+\frac{1}{16}P^{-4}<0.32P^{-2}<e^{-m}.
\end{align*}
The expression $\log\frac{Q}{P}$ can be written as a linear form in
three logarithms in algebraic integers. Indeed
$$\Lambda:=\log\frac{Q}{P}=-m\log\alpha_1+n\log\alpha_2+\log\alpha_3,$$
with $\alpha_1=2+\sqrt{3}$, $\alpha_2=5+2\sqrt{6}$ and $\alpha_3=\sqrt{2}$.
Then $0<\Lambda<e^{-m}.$

Now, we can apply the famous result of Baker and W\"{u}stholz \cite{BW93}.

\begin{lemma}\label{BWlemma}
If $\Lambda=b_1\alpha_1+\cdots+b_l\alpha_l\not =0$, where
$\alpha_1,\ldots,\alpha_l$ are algebraic integers and
$b_1,\ldots,b_l$ are rational integers, then
$$ \log|\Lambda|\geq
-18(l+1)!l^{l+1}(32d)^{l+2}h'(\alpha_1)\cdots
h'(\alpha_l)\log(2ld)\log B, $$ where
$B=\max\{|\alpha_1|,\ldots,|\alpha_l|\}$, $d$ is the degree of the
number field generated by $\alpha_1,\ldots,\alpha_l$ over $\Q$,
$$h'(\alpha)=\frac{1}{d}\max\{h(\alpha),|\log\alpha|,1\}$$
and $h(\alpha)$ denotes the logarithmic Weil height of $\alpha$ .
\end{lemma}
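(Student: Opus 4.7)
The final statement is the celebrated effective lower bound of Baker and W\"ustholz \cite{BW93} on linear forms in logarithms of algebraic numbers, which the authors quote verbatim as a black box. It is not something this paper will reprove, and its actual proof is a substantial chapter of transcendence theory well beyond the scope of a short note on Diophantine tuples. Accordingly, any honest ``proof proposal'' can only be a high-level summary of the Baker--W\"ustholz strategy, together with a description of how the lemma is meant to be applied in the next step.

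At a very high level, the proof of the lemma proceeds by contradiction. Assuming $|\Lambda|$ is smaller than the claimed lower bound, one uses Siegel's lemma to construct a non-trivial auxiliary polynomial $F(x_1,\ldots,x_l)$ with algebraic-integer coefficients of controlled height and degree, so that the entire function $f(z)=F(\alpha_1^{z},\ldots,\alpha_l^{z})$ vanishes to a prescribed order at $z=0,1,\ldots,S$ for some large integer $S$. The smallness of $|\Lambda|$ is then exploited in an analytic extrapolation step (Schwarz-type maximum-modulus estimates) to force $f$ to vanish to much higher order, or on a longer progression, than the construction alone would predict. Finally, a zero estimate on the algebraic torus $\mathbb{G}_m^l$ -- Philippon's or W\"ustholz's multiplicity estimate -- contradicts the construction. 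W\"ustholz's key refinement over Baker's original argument is a combination of Kummer-theoretic input and sharper multiplicity bounds, which together produce the explicit numerical constants displayed in the statement. The main obstacle here is structural rather than technical: none of these ingredients admits a serious shortcut, so one cannot hope to recover the bound ``by hand''.

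What I would actually do in this paper is simply apply the lemma. Setting $l=3$, $d=2$ (since $\alpha_1=2+\sqrt{3}$, $\alpha_2=5+2\sqrt{6}$, $\alpha_3=\sqrt{2}$ all lie in $\Q(\sqrt{6})$), with rational integer coefficients $b_1=-m$, $b_2=n$, $b_3=1$, the first task is to compute the heights $h'(\alpha_i)$ from the minimal polynomials of the $\alpha_i$; these are small absolute constants. Combining the resulting lower bound $\log|\Lambda|\geq -C_0\log B$ with the upper estimate $\log|\Lambda|<-m$ already established yields an inequality of the form $m\leq C_1\log m$, hence an explicit absolute upper bound on $m$. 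The final step -- again classical in this subject -- is a Baker--Davenport continued-fraction reduction applied to the ratio $\log\alpha_1/\log\alpha_2$, which numerically eliminates the finitely many remaining candidate pairs $(m,n)$ and confirms that $(m,n)=(0,0)$ and $(m,n)=(2,1)$ are the only intersections, completing the case $k=1$. The remaining cases $k=0,2,3,4,5$ would then be disposed of by the same template.
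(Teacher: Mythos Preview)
Your identification of the lemma as the Baker--W\"ustholz theorem quoted verbatim from \cite{BW93} is exactly right, and the paper does precisely what you say: it cites the result without proof and immediately applies it. Your high-level sketch of the transcendence-theoretic argument (auxiliary construction via Siegel's lemma, extrapolation, multiplicity estimate) is a fair summary of what lies behind the black box, and nothing more is expected here.

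There is, however, a genuine slip in your application paragraph. You claim $d=2$ on the grounds that $\alpha_1=2+\sqrt{3}$, $\alpha_2=5+2\sqrt{6}$, $\alpha_3=\sqrt{2}$ all lie in $\Q(\sqrt{6})$. They do not: neither $\sqrt{2}$ nor $\sqrt{3}$ belongs to $\Q(\sqrt{6})$, as one checks directly. The compositum is $\Q(\sqrt{2},\sqrt{3})$, which has degree $4$ over $\Q$, and indeed the paper takes $d=4$. This matters quantitatively, since the constant in the Baker--W\"ustholz bound involves $(32d)^{l+2}$ and $\log(2ld)$; with $d=2$ you would get a smaller constant and hence a sharper-looking (but unjustified) bound on $m$. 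The rest of your outline --- combining $\log|\Lambda|\geq -C_0\log m$ with $\log|\Lambda|<-m$ to get $m\leq C_1\log m$, then reducing via Baker--Davenport --- matches the paper's procedure exactly.
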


In our case $l=3$, $d=4$, $B=m$, $\alpha_1=2+\sqrt{3}$, $\alpha_2=5+2\sqrt{6}$ and $\alpha_3=\sqrt{2}$. 
From Lemma \ref{BWlemma} and from $\Lambda<e^{-m}$ we obtain
$$ m\leq 2\cdot 10^{14}\log m.$$
Since the previous inequality does not hold for $m\geq M=10^{16}$,
we conclude that if there is a  solution of $x_m'=x_n$ then $n\leq
m<M=10^{16}$. This upper bound can be reduced by using
the following lemma, which was originally introduced in \cite{BD69}.
\begin{lemma}[\cite{D99}, Lemma 4a]\label{l-red}
Let $\theta$, $\beta$,$\alpha$, $a$ be a positive real numbers and
let $M$ be a positive integer. Let $p/q$ be a convergent of the
continued fraction expansion of $\theta$ such that $q>6M$. If
$\varepsilon=\|\beta q\|-M\cdot\|\theta q\|>0$, where $\|\cdot\|$
denotes the distance from the nearest integer, then the inequality
$$ |m\theta-n+\beta|<\alpha a^{-m}, $$
has no integer solutions $m$ and $n$ such that $\log(\alpha
q/\varepsilon)/\log a\leq m\leq M$.
\end{lemma}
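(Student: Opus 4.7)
The plan is to multiply the inequality by $q$, exploit that $p/q$ is a continued-fraction convergent to $\theta$ so that $|q\theta - p| = \|\theta q\|$ is small, strip off the resulting integer part, and extract an upper bound on $m$ that clashes with the assumed lower bound.

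Concretely, multiplying $|m\theta - n + \beta| < \alpha a^{-m}$ through by $q$ yields $|m(q\theta) - nq + \beta q| < \alpha q a^{-m}$. Writing $q\theta = p + (q\theta - p)$, the left-hand side becomes $|(mp - nq) + \beta q + m(q\theta - p)|$. Since $mp - nq$ is an integer, adding it to $\beta q$ does not change the distance to the nearest integer, so $|(mp - nq) + \beta q|$ is bounded below by $\|\beta q\|$. A triangle inequality then gives
$$\|\beta q\| \le |(mp - nq) + \beta q + m(q\theta - p)| + m\,|q\theta - p| < \alpha q a^{-m} + m\,\|\theta q\|.$$

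Applying the hypothesis $m \le M$ to replace $m\,\|\theta q\|$ by $M\,\|\theta q\|$ and rearranging produces
$$\varepsilon = \|\beta q\| - M\,\|\theta q\| < \alpha q a^{-m},$$
and taking logarithms yields $m < \log(\alpha q/\varepsilon)/\log a$, contradicting the assumed lower bound. The argument is essentially a single clean manipulation, so there is no serious obstacle; the only subtlety is recognizing that $p$ is the nearest integer to $q\theta$ (ensuring $|q\theta - p| = \|\theta q\|$), which follows from the standard fact that $|q\theta - p| < 1/2$ for convergents. The condition $q > 6M$ plays no role in the chain of inequalities itself; it is an auxiliary hypothesis ensuring $\|\theta q\|$ is small enough that the condition $\varepsilon > 0$ is achievable in practice.
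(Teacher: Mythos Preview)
Your argument is correct and is precisely the standard proof of this Baker--Davenport reduction lemma. Note that the paper does not supply its own proof here: the lemma is quoted from \cite{D99} and used as a black box, so there is no in-paper proof to compare against, but what you have written is essentially the argument one finds in the cited source.
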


After we apply Lemma \ref{l-red} with
$\theta=\log\alpha_1/\log\alpha_2$,
$\beta=\log\alpha_3/\log\alpha_2$, $\alpha=1/\log\alpha_2$,
$M=10^{16}$ and $a=e$, we obtain a new upper bound $M=38$ and by
another application of Lemma \ref{l-red} we obtain $M=7$. By examining all the possibilities, we
prove that the only solutions of $x_m'=x_n$ are $x_0'=x_0=0$ and $x_2'=x_1=4$.

All the other cases can be treated similarly. We get these explicit results.
\begin{align*}
k&=0: & x_0&=x_1'=1\\
k&=1: & x_0&=x_0'=0, \quad x_1=x_2'=4\\
k&=2: & y_0^{+}&=y_1'=2, \quad y_1^{-}=y_3'=26\\
k&=3: & x_0^{+}&=x_2'=4, \quad x_1^{-}=x_4'=56 \\
k&=4: & y_0^{+}&=y_3'=26, \quad y_1^{-}=y_5'=362\\ 
k&=5: & y_0^{+}&=y_4'=97, \quad y_1^{-}=y_6'=1351.
\end{align*}
These can be interpreted in terms of Theorem \ref{trm1}. So, if $0\leq k\leq 5$ and the set $\{1, 3, d_k, d\}$
is a Diophantine quadruple in $\Z\left[\sqrt{-2}\right]$, then $d=d_{k-1}$ or $d=d_{k+1}$, which completes the proof of Theorem \ref{trm1}.

\subsection*{Acknowledgements}
Zrinka Franu\v si\'c was supported by the Ministry of Science, Education and Sports, Republic of
Croatia, grant 037-0372781-2821. Dijana Kreso was supported
by the Austrian Science Fund (FWF): W1230-N13 and NAWI Graz.

\bibliographystyle{amsplain}
\bibliography{Dijana1}

\end{document}